\definecolor{darkred}{rgb}{1,0,0} 
\definecolor{green}{rgb}{0,0.6,0}
\definecolor{blue}{rgb}{0,0,1}
\theoremstyle{plain}
\newtheorem{neu}{}[section]
\newtheorem{Cor}[neu]{Corollary}
\newtheorem*{Cor*}{Corollary}
\newtheorem{Thm}[neu]{Theorem}
\newtheorem*{Thm*}{Theorem}
\newtheorem{Prop}[neu]{Proposition}
\newtheorem*{Prop*}{Proposition}
\theoremstyle{definition}
\newtheorem{Lemma}[neu]{Lemma}
\newtheorem*{Rmk*}{Remark}
\newtheorem{Rmk}[neu]{Remark}
\newtheorem*{Ex*}{Example}
\newtheorem*{Qu*}{Question}
\newtheorem{Def}[neu]{Definition}
\theoremstyle{remark}
\theoremstyle{definition}
\newcommand{\x}{\times}
\newcommand{\p}{\partial}
\newcommand{\om}{\omega}
\newcommand{\into}{\hookrightarrow}
\newcommand{\pf}{\longrightarrow}
\newcommand{\N}{{\mathbb{N}}}
\newcommand{\Z}{{\mathbb{Z}}}
\newcommand{\R}{{\mathbb{R}}}
\newcommand{\C}{{\mathbb{C}}}
\newcommand{\Q}{{\mathbb{Q}}}
\renewcommand{\H}{\mathrm{H}}
\newcommand{\lcm}{\mathrm{lcm}}
\newcommand{\Id}{\mathrm{Id}}
\newcommand{\cl}{\mathrm{cl}}
\newcommand{\CF}{\mathrm{CF}}
\newcommand{\RFH}{\mathrm{RFH}}
\newcommand{\Sp}{\mathrm{Sp}}
\newcommand{\Fix}{\mathrm{Fix}\,}
\newcommand{\Crit}{{\rm Crit}}
\newcommand{\sign}{{\rm sign\,}}
\renewcommand{\SS}{\mathcal{S}}
\renewcommand{\AA}{\mathcal{A}}
\newcommand{\KK}{\mathcal{K}}
\newcommand{\EE}{\mathcal{E}}
\newcommand{\MM}{\mathcal{M}}
\newcommand{\LL}{\mathcal{L}}
\newcommand{\RR}{\mathcal{R}}
\newcommand{\RRR}{\mathscr{R}}
\newcommand{\beq}{\begin{equation}}
\newcommand{\beqn}{\begin{equation}\nonumber}
\newcommand{\eeq}{\end{equation}}
\newcommand{\bea}{\begin{equation}\begin{aligned}}
\newcommand{\bean}{\begin{equation}\begin{aligned}\nonumber}
\newcommand{\eea}{\end{aligned}\end{equation}}
\numberwithin{equation}{section}
\numberwithin{figure}{section}
\begin{document}
\title[Symmetric periodic orbits in the restricted three-body problem]{Some remarks on symmetric periodic orbits in the restricted three-body problem}
\author{Jungsoo Kang}
\address{Department of Mathematics\\
     Seoul National University, Seoul, Korea}
\address{Mathematisches Institut,
Westf\"alische Wilhelms-Universit\"at M\"unster, M\"unster, Germany}
\email{jungsoo.kang@me.com}
\begin{abstract}
 The planar circular restricted three-body problem (PCRTBP) is symmetric with respect to the line of masses and there is a corresponding anti-symplectic involution on the cotangent bundle of the 2-sphere in the regularized PCRTBP. Recently it turned out that each bounded component of an energy hypersurface with low energy for the regularized PCRTBP is fiberwise starshaped. This enables us to define a Lagrangian Rabinowitz Floer homology which is related to periodic orbits symmetric for the anti-symplectic involution in the regularized PCRTBP and hence to symmetric periodic orbits in the unregularized problem. We compute this homology and discuss the properties of the symmetric periodic
orbits.
\end{abstract}

\maketitle
\setcounter{tocdepth}{1}

\section{Introduction}
The project to apply holomorphic curve techniques to the planar circular restricted three-body problem (PCRTBP) just began, see \cite{AFvKP12,AFFHvK11,AFFvK12,CFvK11}. In particular, Albers-Frauenfelder-van Koert-Paternain \cite{AFvKP12} proved that each bounded component of the regularized energy hypersurface is  a fiberwise starshaped hypersurface in $T^*S^2$ for energy less than the first critical value. As they mentioned this opens up the possibility of applying holomorphic curve techniques. In this paper we compute a related Lagrangian Rabinowitz Floer homology and using this computation we make various observations about symmetric periodic orbits which we will introduce below. We review the regularized PCRTBP in Section 2 and background material on Lagrangian Rabinowitz Floer homology is contained in Section 3.  \\[-1.5ex]

  We refer to two massive primaries as the earth and the moon and to the other body with negligible mass as the satellite. The configuration space is $\R^2\setminus\{q^E,q^M\}$ and the phase space is given by  $T^*(\R^2\setminus\{q^E,q^M\})=(\R^2\setminus\{q^E,q^M\})\x\R^2$. Here $q^E$ and $q^M$ are taken to lie on the real axis and represent the positions of the earth and the moon respectively. The Hamiltonian for the planar circular restricted three body problem (PCRTBP) is given by 
\beq\label{eq:Hamiltonian for the PCRTBP}
H(q,p)=\frac{1}{2}|p|^2-\frac{\mu}{|q-q^M|}-\frac{1-\mu}{|q-q^E|}+q_2p_1-q_1p_2
\eeq
where $\mu\in(0,1)$ is the normalized mass of the moon. The energy hypersurface $H^{-1}(c)$ with energy $c\in\R$ below the first critical value $H(L_1)$ is composed of three connected components. Following \cite{AFvKP12}, we denote by $\Sigma_c^E$ resp. $\Sigma_c^M$ the bounded component close to the earth resp. to the moon. Since these components are noncompact due to collisions, we compactify each of them into $\overline\Sigma_c^E$ and $\overline\Sigma_c^M$ by Moser regularization. Since the discussions in this paper go through for both $\overline\Sigma_c^E$ and $\overline\Sigma_c^M$ with $c<H(L_1)$, we call them $\Sigma$ for convenience. The regularized phase space is the cotangent bundle of $S^2$ and $\Sigma$ is diffeomorphic to the unit cotangent bundle of $S^2$. We denote the Hamiltonian function corresponding to $H$ via Moser regularization by $Q\in C^\infty(T^*S^2)$. More details can be found in Section 2. \\[-1.5ex]

An interesting feature of the PCRTBP is that there is an involution such that the problem is symmetric with respect to this involution. More precisely, there exists an anti-symplectic involution $\RR$ on $T^*\R^2$ given by 
$$
\RR(q_1,q_2,p_1,p_2)=(q_1,-q_2,-p_1,p_2)
$$
such that $H\circ\RR=H$. Thus the Hamiltonian vector field of $H$ is invariant under $\RR$. Through Moser regularization, $\RR$ induces the anti-symplectic involution 
$$
\RRR=I\circ T^*\rho:T^*S^2\pf T^*S^2
$$ 
where $I:T^*S^2\to T^*S^2$ is given by $I(\xi,\eta)=(\xi,-\eta)$ and $\rho$ is the reflection on $S^2$ about a great circle. The fixed locus of $\RRR$ is the conormal bundle of the great circle. In the present paper we are concerned with a periodic orbit of prescribed energy which is carried into itself by $\RRR$, i.e. $(x,2T)$ satisfying
\beq\label{eq:symmetric periodic Reeb orbit}
x:\R/2T\Z\to\Sigma,\quad\dot x=X_Q(x),\quad x(T+t)=\RRR x(T-t)
\eeq
which we refer to a {\em symmetric periodic orbit}. Here $X_Q$ is the Hamiltonian vector field of $Q$. As mentioned above, $\Sigma$ is shown to be a fiberwise starshaped hypersurface (tight $\R P^3$) in $T^*S^2$ and thus $\Fix\RRR\cap\Sigma$ is diffeomorphic to the disjoint union of two circles $L_+$ and $L_-$ (legendrian knots). We note that every symmetric periodic orbit $(x,2T)$ intersects with $L_+\cup L_-$ exactly twice at time $0$ and $T$ (after an appropriate time shift). It is an interesting question whether a symmetric periodic orbit intersects with both circles or only one of them. 

\begin{Def}
A symmetric periodic orbit on $\Sigma$ is called of {\em type I} if it intersects with both $L_+$ and $L_-$. Otherwise, we call it of {\em type II}.
\end{Def}

For explicit computations, let us consider the case  $\Sigma=\overline\Sigma^M_c$ with $c<H(L_1)$. We embed the cotangent bundle of $S^2$ in $\R^6$ as below.
$$
T^*S^2=\{(\xi,\eta)\in\R^6\,|\,\xi\in S^2,\,\xi\cdot\eta=0\}.
$$
Then the inverse process of Moser regularization gives a correspondence
\bean
\mathcal{M}:T^*S^2&\pf T^*\R^2,\\ (\xi,\eta)&\longmapsto \Big(\eta_1(1-\xi_0)+\xi_1\eta_0+q_1^M,\eta_2(1-\xi_0)+\xi_2\eta_0,\frac{-\xi_1}{1-\xi_0},\frac{-\xi_2}{1-\xi_0}\Big)
\eea
where the moon is located at $q^M=(q_1^M,q_2^M)=(-(-1-\mu),0)$ for $\mu\in(0,1)$.
Via this map, the anti-symplectic involution $\RR$ on $T^*\R^2$ corresponds to the anti-symplectic involution $\RRR$ on $T^*S^2$ defined by 
$$
\RRR(\xi_0,\xi_1,\xi_2,\eta_0,\eta_1,\eta_2)=(\xi_0,-\xi_1,\xi_2,-\eta_0,\eta_1,-\eta_2),\quad (\xi,\eta)\in T^*S^2.
$$
As mentioned, $\RRR$ can be regarded as the composition of two involutions 
$$
I:T^*S^2\to T^*S^2,\quad I(\xi,\eta)=(\xi,-\eta).
$$
and 
$$
T^*\rho:T^*S^2\to T^*S^2,\quad T^*\rho(\xi_0,\xi_1,\xi_2,\eta_0,\eta_1,\eta_2)=(\xi_0,-\xi_1,\xi_2,\eta_0,-\eta_1,\eta_2)
$$
where $\rho$ is the reflection on $S^2$ about the great circle $L=\{\xi\in S^2\subset\R^3\,|\, \xi_1=0\}$. Then the fixed locus of $\RRR$ is the conormal bundle of $L$.
$$
\Fix\RRR=\{(\xi,\eta)\in T^*S^2\,|\,\xi_1=0,\,\eta_0=\eta_2=0\}\cong N^*L.
$$
Since $\Sigma$ is a fiberwise starshaped hypersurface,  $\Fix\RRR\cap\Sigma$ is composed of two circles 
$$
L^M_+:=\{(\xi,\eta)\in\Fix\RRR\,|\,\eta_1=f_+(\xi_0,\xi_2)\},\quad L^M_-:=\{(\xi,\eta)\in\Fix\RRR\,|\,\eta_1=f_-(\xi_0,\xi_2)\}
$$
where $f_\pm:\{(\xi_0,\xi_2)\,|\,\xi_0^2+\xi_2^2=1\}\to\R_\pm$ is a positive/negative function. Let  $\pi:T^*\R^2\to\R^2$ be the footpoint  projection map. Then the regions of $L^M_+$ and $L^M_-$ in the configuration space of the PCRTBP are as below.
$$
\LL_-^M:=\pi\circ\mathcal{M}(L^M_-)=\{(q_1,0)\,|\,q_1\in[\min_{(\xi,\eta)\in\Fix\RRR}\{ f_-(1-\xi_0)\}+q_1^M,q_1^M]\}
$$
and
$$
\LL_+^M:=\pi\circ\mathcal{M}(L^M_+)=\{(q_1,0)\,|\,q_1\in[q_1^M,\max_{(\xi,\eta)\in\Fix\RRR}\{f_+(1-\xi_0)\}+q_1^M]\}. 
$$
The region $\KK_c^M=\pi(\overline{\Sigma}_c^M)$ is called the {\em Hill's region} (around the moon) where the satellite with energy $c$ can move. In Figure \ref{fig:Hill's region1} below we depict $\KK_c^M$ (the region encircled by the dotted curve) and $\LL^M_\pm$.
\begin{figure}[htb]
\includegraphics[width=.65\textwidth,clip]{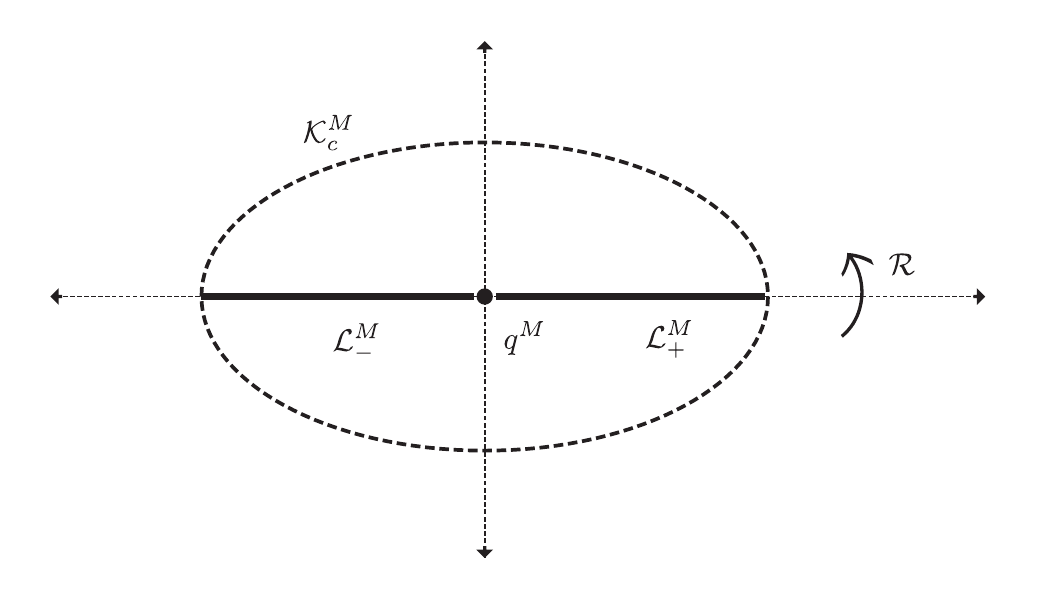}\caption{Hill's region near the moon}\label{fig:Hill's region1}
\end{figure}

Suppose that a symmetric periodic orbit $(x,2T)$ does not pass through the north pole of $S^2$ (i.e. does not collide with the moon). Then there is a periodic solution $((q^x(t),p^x(t)),2T)$ of the Hamiltonian system of  \eqref{eq:Hamiltonian for the PCRTBP} corresponding to $(x,2T)$. Since $(q^x(t),p^x(t))$ passes through $\Fix\RR$ at time $0$ and $T$ and 
$$
\dot q^x_1=\frac{\p H}{\p p_1}=p^x_1+q^x_2=0 \quad\textrm {on}\quad \Fix\RR=\{(q_1,0,0,p_2)\},
$$
$q^x(t)$ cuts the $q_1$-axis at a right-angle at time $0$ and $T$. The figures \ref{fig:Type I} and \ref{fig:Type II} describe the geometric motions of symmetric periodic orbits in the configuration space $\R^2$. If $(x,2T)$ is of type I, $(q_1^x(0)-q_1^M)(q_1^x(T)-q_1^M)<0$, see Figure \ref{fig:Type I}. We note that the Birkhoff retrograde orbit \cite{Bir15} which looks like $\mathcal{X}_1$ is of type I. On the other hand, if $(x,2T)$ is of type II, $(q_1^x(0)-q_1^M)(q_1^x(T)-q_1^M)>0$, see Figure \ref{fig:Type II}. We doubt whether there is a symmetric periodic orbit which does not surround the primary like $\mathcal{X}_3$. But we expect Type II symmetric periodic orbits like $\mathcal{X}_4$ mostly exist in the PCRTBP for arbitrary $c<H(L_1)$ and $\mu\in(0,1)$. Indeed, when $\mu=0$ (the rotating Kepler problem), there always exist such Type II symmetric periodic orbits for every energy below the first critical value: A symmetric periodic orbit which is a $k$-fold covered ellipse in an $l$-fold covered coordinate system (defined in \cite{AFFvK12}) is of type II whenever $k+l$ is odd. Then the perturbation method based on the implicit function theorem, for instance \cite{Are63,Bar65}, ensures survival of them at least for small $\mu\approx0$.

\begin{figure}[htb]
\includegraphics[width=.6\textwidth,clip]{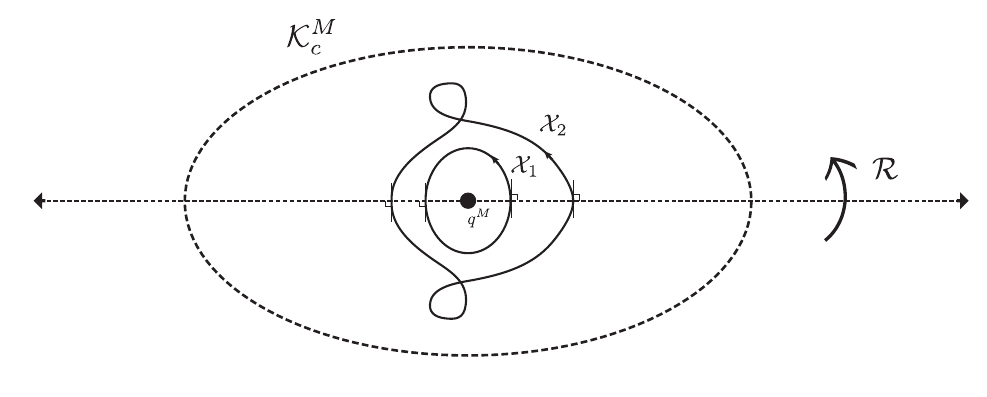}\caption{Type I symmetric periodic orbits}\label{fig:Type I}
\end{figure}

\begin{figure}[htb]
\includegraphics[width=.6\textwidth,clip]{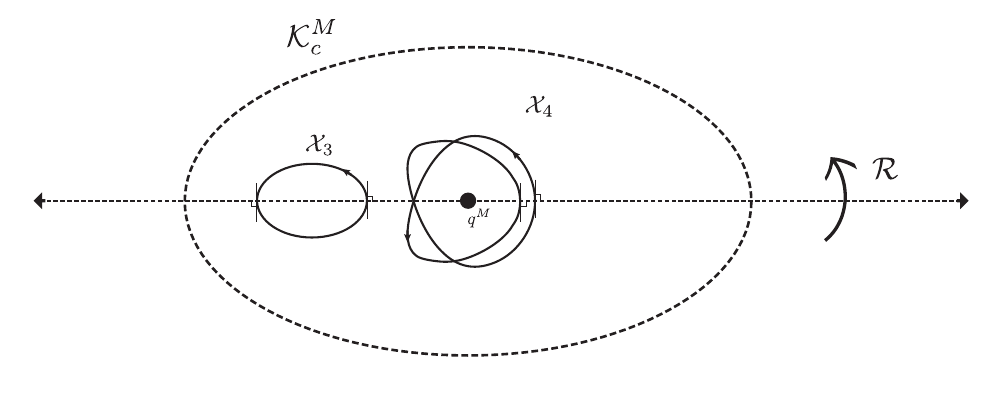}\caption{Type II symmetric periodic orbits}\label{fig:Type II}
\end{figure}
 
The Hamiltonian vector field $X_Q$ on $\Sigma$ can be lifted to the Reeb vector field on a starshaped hypersurface in $\R^4$ with respect to the contact form $\alpha:=\frac{1}{2}(x_1dy_1-y_1dx_1+x_2dy_2-y_2dx_2)$, see e.g. \cite{HP08}. We denote such a double cover of $\Sigma$ by $S\subset\R^4$ and the covering map by $\Pi:S\to\Sigma$. A hypersurface $S\in\R^4$ is called {\em strictly convex} if it represented as $S=H^{-1}(0)$ for $H\in C^\infty(\R^4)$ satisfying $H''(x)\geq c\Id_{\R^4}$, $x\in S$, for some $c>0$.\\[-1.5ex]

\noindent\textbf{Observation A.} {\em Suppose that $S$ is strictly convex.}
\begin{itemize}
\item[(A1)] {\em There exist at least two symmetric periodic orbits on $\Sigma$.}
\item[(A2)] {\em If there are precisely two periodic orbits on $\Sigma$, both are symmetric periodic orbits of type I.}
\item[(A3)] {\em There exist infinitely many periodic orbits on $\Sigma$ if a type II symmetric periodic exists.}
\end{itemize}

\begin{proof}
We note that $S$ is a centrally symmetric hypersurface, i.e. $S=-S$ in $\R^4$ and that there is an anti-symplectic involution $\widetilde\RRR$ on $\R^4$ given by 
$$
\widetilde\RRR(x_1,x_2,y_1,y_2):=(-x_1,x_2,y_1,-y_2)
$$
such that $\Pi\circ\widetilde\RRR|_{S}=\RRR|_{\Sigma}$. Since we have a map $\Psi(x_1,x_2,y_1,y_2):=(x_1,-y_2,y_1,x_2)$ on $\R^4$ 
such that $\Psi^*\alpha=\alpha$, the result on $\RRR$-symmetric periodic orbits on $S$ can be inferred from the result on periodic orbits symmetric with respect to 
$$
N:=\Psi\circ\widetilde\RRR\circ\Psi^{-1},\quad (x_1,x_2,x_3,x_4)\mapsto(-x_1,-x_2,y_1,y_2)
$$
on $\Psi(S)$ which is also centrally symmetric due to $\Psi\circ-\Id_{\R^4}=-\Id_{\R^4}\circ\Psi$. It is worth remarking that $N$-symmetric periodic orbits are well known as  {\em brake orbits} in classical mechanics which have a rich history. In particular, we can employ a theorem of \cite{LZZ06} to prove the assertion (A1) when $S$ is strictly convex. 

The assertions (A2) and (A3) are  immediate consequences of (A1) and a theorem in \cite{HWZ98}, see Remark \ref{rmk:ellipsoid}. Indeed if there is a type II symmetric periodic orbit $(x,2T)$, there are two distinct periodic Reeb orbits $(\tilde x_1,2T)$ and $(\tilde x_2,2T)$ on $S$ such that $\pi(\tilde x_1)=\pi(\tilde x_2)=x$. But there is another periodic Reeb orbit on $S$ due to (A1) and thus the theorem of \cite{HWZ98} guarantees the existence of infinitely many periodic orbits on $S$ and hence on $\Sigma$ as well.
\end{proof}

We summarize the following correspondence which was used in the proof of the observation.

\bean
\textrm{The regularized PCRTBP}\quad\;&\longleftrightarrow &&\!\!\!\textrm{Centrally symmetric reversible $S^3$}\\[.5ex]
\{\textrm{Type I symmetric periodic orbit}\}&\longleftrightarrow &&\!\!\!\!\{\textrm{Centrally symmetric brake orbit}\}\\[.5ex]
\{\textrm{Type II symmetric periodic orbit}\}&\longleftrightarrow &&\!\!\!\!\{\textrm{Centrally asymmetric brake orbit}\}\\[.5ex]
\eea

The above observation can be used to prove the existence of infinitely many periodic orbits in the PCRTBP for a given mass ratio and a given energy level: For such a prescribed data, it is not difficult to check if the energy hypersurface is strictly convex. Moreover it is conceivable that in some cases a type II symmetric periodic orbit can be detected by a numerical method. Aforementioned type II symmetric periodic orbits exist indeed for small $\mu\approx0$. A recent paper \cite{AFFHvK11} shows that $S$ is strictly convex for large $\mu\approx1$. Contrary to expectations, $S$ is not strictly convex for $\mu\approx0$,  but dynamically convex (see Theorem B) which is a symplectical generalization of strictly convexity, see \cite{AFFvK12}.
\begin{Rmk}\label{rmk:ellipsoid}
In order to give examples where (A2) holds, we consider the ellipsoid
$$
\EE_{4}(r_1,r_2)=\Bigg\{(z_1,z_2)\in\C^2\;\Big|\;\frac{|z_1|^2}{r_1}+\frac{|z_2|^2}{r_2}=1,\;a_2\geq a_1>0\Bigg\}
$$
which is a typical example of a strictly convex hypersurface in $(\C^2,\frac{i}{2}(dz_1\wedge d\bar z_1+dz_2\wedge d\bar z_2))$. The Reeb flow on the ellipsoid $\EE_{4}(r_1,r_2)$ is given by
$$
z(t)=(z_1,z_2)(t)=(a_1e^{2ti/r_1},a_2e^{2ti/r_2}),\quad \frac{a_1^2}{r_1}+\frac{a_2^2}{r_2}=1,\;a_1,\,a_2>0.
$$
The minimal periods of $z_1(t)$ and $z_2(t)$ are $T_1=\pi r_1$ and $T_2=\pi r_2$ respectively. Thus if $r_1/r_2\notin\Q$, there are precisely two centrally symmetric periodic orbits $(\sqrt r_1e^{2ti/r_1},0)$ and $(0,\sqrt r_2e^{2ti/r_2})$. In contrast, if $r_1/r_2\in\Q$, all orbits are periodic with the minimal period $T=\lcm(p,q)T_1/p=\lcm(p,q)T_2/q$ where $p,\,q\in\N$ satisfy $p/q=r_1/r_2$. 

This shows that the ellipsoid possesses either two or infinitely many periodic orbits. In fact this dichotomy remains true for a wider class of 3-dimensional starshaped hypersurfaces: dynamically convex starshaped hypersurfaces \cite{HWZ98}; see also \cite{HWZ03}.
\end{Rmk}

 The next result of the present note is 1) to compute the Lagrangian Rabinowitz Floer homology which is relevant to symmetric periodic orbits in the PCRTBP (more generally, tight $\R P^3$s) and 2) to reprove Observation A using this computation in the nondegenerate and dynamically convex case. 
 
 We refer to $S$ {\em dynamically convex} if the Conley-Zehnder index (defined in Section 3) of every periodic Reeb orbit is greater than or equal to 3. It was proved that a strictly convex hypersurface is $\R^4$ is dynamically convex, see \cite[Theorem 3.4]{HWZ98} or \cite[Chapter 15]{Lon02}. Therefore $S$ is dynamically convex when $\mu\approx 0$ or $\mu\approx1$ due to \cite{AFFHvK11,AFFvK12}.\\[-1.5ex]

\noindent\textbf{Theorem B.} {\em Suppose that $S$ is nondegenerate and dynamically convex. Then,
\beq\label{eq:computation of LagRFH}
\RFH_*(\Sigma,\Fix\RRR,T^*S^2)=\Z_2\oplus\Z_2\oplus\Z_2\oplus\Z_2,\quad *\in\Z\setminus\{0,1\}.
\eeq
Moreover from this computation we can derive Observation A.}\\[-1.5ex]

The homology computation \eqref{eq:computation of LagRFH} will be carried out in Theorem \ref{thm:computation of LagRFH2} and the remaining assertion will be proved in subsection 3.4. We close this introductory section with possible applications of Theorem B. First, making use of an idea behind of the theorem, it is possible to find two brake orbits on dynamically convex hypersurfaces in $\R^{2n}$. Another possible application\footnote{To be honest, for this application we need to use Wrapped Floer homology which can be thought as half of Lagrangian Rabinowitz Floer homology.} is as follows. The homology computation \eqref{eq:computation of LagRFH} can be used to guarantee the existence of a gradient flow line which is a solution of a perturbed holomorphic curve equation. Then by pushing out the perturbed term to infinity, we obtain a symmetric punctured holomorphic sphere in $T^*S^2$ which has a finite energy in the sense of Hofer \cite{Hof93}.  Unlike gradient flow lines, holomorphic curves always have positive intersection numbers which are related to the positivity of linking numbers of asymptotic orbits. Moreover we expect that this holomorphic curve gives rise to an annulus type global surface of section in some nice situations. This kind of argument will be treated in the forthcoming paper \cite{FK14}.

\section{The regularized restricted three body problem}
Though the content of this section can be found in \cite{AFvKP12}, we briefly review the regularized PCRTBP to make this paper self-contained. As the name of the PCRTBP indicates, we assume that the moon and the earth rotate in a circular trajectory with center at the center of masses and that the satellite is massless and moves on the plane where the moon and the earth rotate. Let $m_E$ be the mass of the earth and $m_M$ be the mass of the moon. We denote the normalized mass of $m_M$ by $\mu$, i.e. 
$$
\mu=\frac{m_M}{m_E+m_M}\in[0,1].
$$
In the rotating coordinate system, the earth and the moon are located at
$$
q^E=(\mu,0),\quad q^M=(-(1-\mu),0)
$$
respectively and the phase space is given by 
$$
T^*(\R^2\setminus\{q^E,q^M\})=(\R^2\setminus\{(q^E,q^M)\})\x\R^2
$$ 
The positions of the earth and the moon are removed to avoid collisions. The Hamiltonian for the satellite $H:(\R^2\setminus\{(q^E,q^M)\})\x\R^2\to\R$ is given by
\beqn
H(q,p)=\frac{1}{2}|p|^2-\frac{1-\mu}{|q-q^E|}-\frac{\mu}{|q-q^M|}+q_1p_2-q_2p_1.
\eeq
The Hamiltonian $H$ carries exactly five critical points $(L_1,L_2,L_3,L_4,L_5)$ called Lagrange points. We may assume that 
$$
H(L_1)<H(L_2)\leq H(L_3)<H(L_4)=H(L_5).
$$ 
The Hamiltonian $H$ is invariant under the anti-symplectic involution 
$$
\RR:T^*\R^2\to T^*\R^2,\quad (q_1,q_1,p_1,p_2)\mapsto(q_1,-q_2,-p_1,p_2),
$$
which preserves the three (colinear) Lagrange points and interchanges the two (equilateral) Lagrange points. We denote by $\pi:T^*\R^2\to\R^2$ the footpoint projection map. Then the Hill's region where the satellite with energy $c$ moves is $\pi(H^{-1}(c))$. The region $\pi(H^{-1}(c))$ for $c<H(L_1)$ is composed of two bounded regions and one unbounded region. We abbreviate the bounded regions by $\KK_c^E$ and $\KK_c^M$ so that $q^E\in\cl(\KK_c^E)$ and $q^M\in\cl(\KK_c^M)$. Likewise $H^{-1}(c)$ consists of two bounded components and one unbounded component. We denote by $\Sigma_c^E$ resp. $\Sigma_c^M$ the bounded component corresponding to $\KK_c^E$ resp. $\KK_c^M$.\\[-1.5ex]

In \cite{Mos70}, Moser regularized an energy hypersurfaces of the Kepler problem with negative energy into the unit tangent bundle of $S^2$. The PCRTBP (in the rotating coordinate system) can also be regularized in a similar way, see \cite[Section 6]{AFvKP12}. In what follows we briefly outline the regularization process for $\Sigma^M_c$ which is the bounded component close to the moon. We first introduce an independent variable 
$$
s=\int\frac{dt}{|q-q^M|}
$$
and define the Hamiltonian $K(q,p)$ by
$$
H(q,p)=\frac{K(q,p)}{|q-q^M|}+c.
$$
Here $H^{-1}(c)$ is the energy hypersurface to be regularized. One can easily check that the Hamiltonian flow of $K$ at energy level $0$ with time parameter $s$ corresponds to the Hamiltonian flow of $H$ at energy level $c\in\R$ with time parameter $t$. We set $p=-x$, $q-q^M=y$ and perform the inverse of the stereographic projection. Here the stereographic projection 
\bean
\SS:T^*S^2=\{(\xi,\eta)\}&\pf T^*\R^2=\{(x,y)\}\\ 
\eea
is given by
$$
\SS(\xi,\eta)=\Big(\frac{\xi_1}{1-\xi_0},\frac{\xi_2}{1-\xi_0},\eta_1(1-\xi_0)+\xi_1\eta_0,\eta_2(1-\xi_0)+\xi_2\eta_0\Big)
$$
where $\xi=(\xi_0,\xi_1,\xi_2)\in S^2\subset\R^3$ and $\eta=(\eta_0,\eta_1,\eta_2)\in T^*_\xi S^2$, i.e. $\xi_0\eta_0+\xi_1\eta_1+\xi_2\eta_2=0$. Then we obtain the Hamiltonian function $K\circ\SS$ on $T^*S^2$. Since $K\circ\SS$ is not smooth at the zero section, we consider instead 
$$
Q:T^*S^2\to\R,\quad Q:=\frac{1}{2}|\eta|^2(K\circ\SS+\mu)^2.
$$
Then one can readily check that Hamiltonian vector fields on $\Sigma_c^M$ are lifted to those of $Q^{-1}(\mu^2/2)=:\overline{\Sigma}_c^M$. In a similar vein we can compactify $\Sigma_c^E$ into $\overline{\Sigma}_c^E$. A remarkable theorem of Albers-Frauenfelder-van Koert-Paternain \cite{AFvKP12} asserts that $\overline{\Sigma}^E_c$ and $\overline{\Sigma}_c^M$ are fiberwise starshaped hypersurfaces in $T^*S^2$.\\[-1.5ex]

\section{Rabinowitz Floer homology and Proof of Theorem A}

A simple observation shows that the problem \eqref{eq:symmetric periodic Reeb orbit} can be interpreted as the boundary value problem for the Lagrangian submanifold $\Fix\RRR$. Indeed if $(x,T)$ solves
\beq\label{eq:characteristic chord eq with conormal boundary condition}
x:[0,T]\to\Sigma,\quad\dot x=X_Q(x),\quad (x(0),x(T))\in \Fix\RRR\x\Fix\RRR,
\eeq
then so does $x_{\RRR}(t):=\RRR x(T-t)$ and we obtain a symmetric periodic orbit
$$
x_{\RRR}\# x:\R/2T\Z\to\Sigma,\quad x_{\RRR}\# x(t):=\left\{\begin{aligned} x(t)\qquad &t\in[0,T],\\[.5ex]
x_{\RRR}(t)\;\quad &  t\in [T,2T].
\end{aligned}\right.
$$
As mentioned in the introduction, $\Sigma$ can be both $\overline{\Sigma}_c^E$ and $\overline{\Sigma}_c^M$ for $c<H(L_1)$. 
\subsection{Construction of Lagrangian Rabinowitz Floer homology}\quad\\[-1.5ex]

We briefly introduce Lagrangian Rabinowitz Floer homology and refer to \cite{Mer10,Mer11} for further details. We also refer the reader to \cite[Appendix A]{Fra04} for Morse-Bott homology and Floer's celebrated papers \cite{Flo88a,Flo88b}. Let $M$ be a closed $n$-dimensional manifold and $Q$ be a closed $d$-dimensional submanifold in $M$. We denote by $T^*M$ the cotangent bundle of $M$ and by $N^*Q$ the conormal bundle of $Q$. We note that $N^*Q$ is an exact Lagrangian submanifold in $(T^*N,d\lambda)$ where $\lambda$ is the Liouville 1-form. We denote by
$$
P_{N^*Q}T^*M:=\{x\in C^\infty([0,1],T^*M)\,|\,x(0),\,x(1)\in N^*Q\}.
$$
Let $H\in C^\infty(T^*M)$ be such that $H^{-1}(0)$ is a smooth fiberwise starshaped hypersurface. Then since $H^{-1}(0)$ splits $T^*M$ into one bounded component and one unbounded component, we can modify $H$ to be constant near infinity. For notational convenience we write again $H$ for the modified Hamiltonian function. Then the Rabinowitz action functional $\AA^H:P_{N^*Q}T^*M\x\R\to\R$ is defined by
$$
\AA_H(x,\eta)=-\int_0^1x^*\lambda-\eta\int_0^1H(x)dt.
$$
A critical point $(x,\eta)\in\Crit\AA^H$ satisfies
$$
\dot x=\eta X_H(x(t)),\quad x(t)\in H^{-1}(0),\quad t\in[0,1]
$$
where $X_H$ is the {\em Hamiltonian vector field} associated to $H$ defined implicitly by $i_{X_H}d\lambda=dH$. Thus if $(x,\eta)$ is a nontrivial critical point of $\AA^H$, i.e. $\eta\neq0$, $(x_\eta,\eta)$ where $x_\eta(t):=x(t/\eta)$ solves 
\beq\label{eq:characteristic chord eq}
x_\eta:[0,\eta]\to H^{-1}(0),\quad\dot x_\eta=X_H(x_\eta),\quad (x_\eta(0),x_\eta(\eta))\in N^*Q\x N^*Q.
\eeq
We choose an $d\lambda$-compatible almost complex structure $J$ and define a  metric $m_J$ on $P_{N^*Q}T^*M\x\R$ by
$$
m_J{(x,\eta)}\big[(\hat x_1,\hat\eta_1),(\hat x_2,\hat\eta_2)\big]:=\int_0^1d\lambda(\hat x_1, J(x)\hat x_2)dt+\hat\eta_1\hat\eta_2.
$$
for $(x,\eta)\in P_{N^*Q}T^*M\x\R$ and for $(\hat x_1,\hat\eta_1),\,(\hat x_2,\hat\eta_2)\in T_{(x,\eta)}P_{N^*Q}T^*M\x\R$. Then a map $w\in C^\infty(\R,P_{N^*Q}T^*M\x\R)$ which solves 
\beqn
\p_s w+\nabla_{m_J}\AA^H(w(s))=0
\eeq
is called a {\em gradient flow line} of $\AA^H$ with respect to the metric $m_J$.

A solution $(x,T)$ for $T\neq0$ of \eqref{eq:characteristic chord eq} is called {\em nondegenerate} if 
$$
\dim \big(T\phi_H^T[T_{x(0)}N^*Q]\cap T_{x(T)}N^*Q\big)=0,
$$
or equivalently if $\phi^T_H(N^*Q)$ transversally intersects $N^*Q$ at $x(T)$ where $\phi_H^t$ is the flow of $X_H$. 
A pair $(H^{-1}(0),N^*Q)$ is also called {\em nondegenerate} if $H^{-1}(0)\pitchfork N^*Q$ and every nontrivial solution of \eqref{eq:characteristic chord eq} is nondegenerate. From now on we assume that $(H^{-1}(0),N^*Q)$ is nondegenerate. Moreover a (trivial) critical manifold $(H^{-1}(0)\cap N^*Q,0)$ of $\AA^H$ is Morse-Bott,  see \cite[Lemma 2.5]{Mer10}. In order to define the Morse-Bott homology of $\AA^H$ we pick an auxiliary Morse-Smale pair $(f,g)$ where $f\in C^\infty(\Crit\AA^H)$ and $g$ is a Riemannian metric on $\Crit\AA^H$. The index for critical points of $f$ is defined by 
$$
\mu_{RFH}:\Crit f\to\Z,\quad \mu_{RFH}(x,\eta):=\left\{\begin{aligned}&\sign(\eta)\mu_{RS}(x_\eta,\eta)+d-\frac{n-1}{2},\quad &&\eta\neq 0,\\
&d-n+1+i_{f}(x,0),&&\eta=0.\end{aligned}\right.
$$
where $\mu_{RS}$ is the transverse Robbin-Salamon index defined in \eqref{eq:Robbin-Salamon index} and $i_f$ is the Morse index for $f$, i.e. the number of negative eigenvalues of the Hessian of $f$. 
We denote by $\Crit_q f$ the set of critical points of $f$ with RFH-index $q\in\Z$. We define a $\Z/2$-vector space
$$
\CF_q(\AA^H,f):=\Big\{\sum_{c\in\Crit_qf}\xi_cc\;\Big|\;\xi_c\in\Z/2\,\Big\}
$$
with the finiteness condition
$$
\#\{c\in\Crit_qf\,|\,\xi_c\neq0,\;\AA^H(c)\geq\kappa\}<\infty.
$$
Next we recall the Frauenfelder's Morse-Bott boundary operator, namely counting gradient flow lines with cascades. For $c_-,c_+\in\Crit f$ and $m\in\N$, a {\em flow line from $c_-$ to $c_+$ with $m$ cascades} 
$$
(\textbf{w},\textbf{t})=\big((w_i)_{1\leq i\leq m},(t_i)_{1\leq i\leq m-1}\big)
$$
consists of gradient flow lines $w_i\in C^\infty(\R,P_{N^*Q}T^*M\x\R)$ of $\AA^H$ and positive real numbers $t_i\in\R_+$ such that 
$$
\lim_{s\to\infty} (w_1(-s),w_m(s))\in W^u(c_-;f)\x W^s(c_+;f),\quad
\lim_{s\to-\infty} w_{i+1}(s)=\phi_f^{t_i}(\lim_{s\to\infty}w_i(s))
$$
for $i=1,\dots,m-1$. Here $W^u(c_-;f)$ resp. $W^s(c_+;f)$ is the unstable manifold resp. the stable manifold and $\phi_f^t$ is the flow of $-\nabla_g f$. It is noteworthy that a flow line with no cascades is nothing but an ordinary negative gradient flow line of $f$. We denote by $\widehat\MM_m(c_-,c_+)$ the space of flow lines with $m$ cascades from $c_-$ to $c_+$. We divide out the $\R^m$-action on $\widehat\MM_m(c_-,c_+)$ defined by shifting the $m$ cascades in the $s$-variable. Then we obtain gradient flow lines with unparametrized cascades and abbreviate  $\MM_m(c_-,c_+):=\widehat\MM_m(c_-,c_+)/\R^m$. We define {\em the set of flow lines with cascades from $c_-$ to $c_+$} by
$$
\MM(c_-,c_+):=\bigcup_{m\in\N\cup\{0\}}\MM_m(c_-,c_+).
$$
The standard arguments in Floer theory proves the following nontrivial facts. For a generic almost complex structure $J$ and a generic Riemannian metric $g$,
\begin{enumerate}
\item[(F1)] $\MM(c_-,c_+)$ is a smooth manifold of finite dimension $\mu_{RFH}(c_-)-\mu_{RFH}(c_+)-1$. Moreover, if $\mu_{RFH}(c_-)-\mu_{RFH}(c_+)=1$, $\MM(c_-,c_+)$ is a finite set.
\item[(F2)] Let $\MM_c(c_-,c_+)$ be the compactification of $\MM(c_-,c_+)$ with respect to the topology of Floer-Gromov convergence. If $\mu_{RFH}(c_-)-\mu_{RFH}(c_+)=2$, $\MM_c(c_-,c_+)$ is a compact one-dimensional manifold whose boundary is 
$$
\p\MM_c(c_-,c_+)=\bigcup_{z}\MM(c_-,z)\x\MM(z,c_+)
$$
where the union runs over $z\in\Crit f$ with $\mu_{RFH}(c_-)-1=\mu_{RFH}(z)$.
\end{enumerate}
Due to (F1), we denote by $n(c_-,c_+)$ the parity of the finite set $\MM(c_-,c_+)$ when $\mu_{RFH}(c_-)-\mu_{RFH}(c_+)=1$. Then the boundary operators $\{\p_q\}_{q\in\Z}$ are defined by
\bean
\p_q:\CF_q(\AA^H,f)&\pf\CF_{q-1}(\AA^H,f)\\
c_-\in\Crit_qf&\longmapsto\!\!\!\!\!\sum_{c_+\in\Crit_{q-1}f}\!\!\!\!\!n(c_-,c_+)\cdot c_+.
\eea
(F2) yields that $\p_{q-1}\circ\p_{q}=0$ and $(\CF_*(\AA^H,f),\p_*)$ is indeed a chain complex. Thus we define {\em Lagrangian Rabinowitz Floer homology} by
$$
\RFH_q(H^{-1}(0),N^*Q,T^*M):=\H_q(\CF_*(\AA^H,f),\p_*).
$$
As the above notation indicates, Lagrangian Rabinowitz Floer homology is invariant under the choice of $(H,J,f,g)$ and depends only on $(H^{-1}(0),N^*Q,T^*M)$.

\subsection{Computation of Lagrangian Rabinowitz Floer homology}\quad\\[-1.5ex]

Making use of the Abbondandolo-Schwartz short exact sequence in \cite{AS09}, Merry proved the following theorem in \cite[Theorem B]{Mer10} (see also Remark 7.7 and Remark 12.6 in \cite{Mer11}). We should mention that he proved more general statements.

\begin{Thm}\label{thm:computation of LagRFH1}
Let $Q$ and $M$ be closed manifolds with $d\leq n/2$. Then
$$
\RFH_*(\Sigma,N^*Q,T^*M)\cong \H_*(P_{Q}M;\Z_2)\oplus \H^{-*+2d-n+1}(P_{Q}M;\Z_2),\quad *\in\Z\setminus\{0,1\}
$$
where the path space $P_{Q}M$ is defined below.
\end{Thm}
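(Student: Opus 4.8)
The plan is to identify the Lagrangian Rabinowitz Floer homology of the pair $(\Sigma,N^*Q)$ with the homology of $N^*Q$-based strings via a long exact sequence, and then to identify the latter with the ``doubled'' expression $\H_*(P_QM)\oplus\H^{-*+2d-n+1}(P_QM)$. The key input is the Abbondandolo--Schwarz short exact sequence from \cite{AS09}, which relates the Rabinowitz Floer complex to the Morse complex of the energy functional on the path space with conormal boundary conditions. Concretely, since $N^*Q$ is an exact Lagrangian and $\Sigma=H^{-1}(0)$ is fiberwise starshaped, the action functional $\AA^H$ has a natural filtration by the sign of the Lagrange multiplier $\eta$: the subcomplex generated by critical points with $\eta>0$ (positively parametrized chords) together with the ``constant'' critical manifold $\Sigma\cap N^*Q$, and the quotient complex generated by critical points with $\eta<0$. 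This yields a long exact sequence in which the positive part computes (up to a degree shift) the homology of the space $P_QM$ of paths in $M$ from $Q$ to $Q$, by the Abbondandolo--Schwarz isomorphism between Lagrangian Floer homology of $N^*Q$ in $T^*M$ and singular homology of $P_QM$; the negative part, by a duality/time-reversal argument (reversing the Reeb flow turns positive chords into negative ones and $H_*$ into $H^*$), computes the corresponding cohomology with the shift $-*+2d-n+1$.

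\medskip

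\noindent\textbf{Sketch of the argument.}
First, I would recall the precise form of the Robbin--Salamon grading $\mu_{RS}$ for conormal chords in $T^*M$ and verify, using the Abbondandolo--Schwarz index theorem, that a chord $x_\eta$ with $\eta>0$ of Robbin--Salamon index $k$ corresponds to a critical point of the energy functional on $P_QM$ of Morse index $k$; the normalization constant $d-\tfrac{n-1}{2}$ in the definition of $\mu_{RFH}$ is exactly what is needed to make the positive part of the Rabinowitz complex coincide with the Morse complex of $\EE$ on $P_QM$ in degree $*$. Second, I would run the tautological long exact sequence of the pair (subcomplex $\subset$ full complex $\to$ quotient). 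The subcomplex, after accounting for the finite-dimensional contribution of the Morse--Bott manifold $\Sigma\cap N^*Q$ (which is $Q$ itself, contributing $\H_*(Q)$ in the appropriate degrees), has homology $\H_*(P_QM)$ away from the low degrees $\{0,1\}$ where the constant chords and the $\eta=0$ part interfere; this is why the statement is restricted to $*\in\Z\setminus\{0,1\}$. Third, for the quotient complex I would use the symmetry of Rabinowitz Floer theory under $\eta\mapsto-\eta$, $t\mapsto 1-t$: this is an anti-isomorphism of chain complexes intertwining the $\eta>0$ complex of $(\Sigma,N^*Q)$ with the $\eta<0$ complex, at the cost of a sign on the action and a reversal of the grading. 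Combined with Poincaré-type duality on the path space $P_QM$ (which is where the shift $2d-n+1$ enters, $2d$ from the conormal fiber dimensions at both endpoints and $-n+1$ from the transverse dimension count), the quotient homology becomes $\H^{-*+2d-n+1}(P_QM)$. Fourth, the condition $d\le n/2$ guarantees that the two pieces live in non-overlapping degree ranges for $|*|$ large, so the long exact sequence splits into short exact sequences; over the field $\Z_2$ these split, giving the direct sum. Since all the mod-$2$ counts are being used, no orientation issues arise.

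\medskip

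\noindent\textbf{Main obstacle.}
The technical heart, and the step I expect to be the main obstacle, is the identification of the positive part of the Rabinowitz complex with the Morse complex of $\EE$ on $P_QM$: this requires the full strength of the Abbondandolo--Schwarz machinery adapted to Lagrangian (conormal) boundary conditions, including the compactness and transversality statements for the interpolating moduli spaces and the matching of gradings, and it is precisely here that one must be careful about the Morse--Bott degeneracy along $\Sigma\cap N^*Q$ and the consequent failure of the clean identification in degrees $0$ and $1$. The duality argument for the negative part is, by comparison, formal once the positive part is understood, but one still needs to check that the time-reversal map is compatible with the chosen almost complex structures and metrics up to the allowed homotopies, so that it descends to homology. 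Everything else --- the long exact sequence, the splitting over $\Z_2$, the degree bookkeeping --- is routine. As noted in the text, this theorem is due to Merry \cite{Mer10,Mer11}, who in fact proves a more general statement; the proof above is a summary of his argument specialized to the case at hand.
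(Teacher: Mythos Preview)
The paper does not supply its own proof of this theorem: it is stated as a quotation of Merry's result \cite{Mer10,Mer11}, with only the one-line indication that the argument ``mak[es] use of the Abbondandolo--Schwarz short exact sequence in \cite{AS09}.'' Your sketch is entirely consistent with that indication --- the filtration by the sign of $\eta$, the identification of the positive part with the Morse homology of $P_QM$ via Abbondandolo--Schwarz, the duality for the negative part, and the splitting of the long exact sequence over $\Z_2$ is precisely the architecture of Merry's proof --- and you correctly acknowledge the attribution at the end. There is therefore nothing in the paper to compare against beyond that single sentence, and your outline matches it.
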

\begin{Rmk}
Although in proving the above theorem one has to use a Hamiltonian function defining $\Sigma$ which has quadratic growth \cite{AS09,Mer10} or linear growth \cite{CFO10} near infinity, the resulting Floer homology coincides with the Rabinowitz Floer homology defined in the previous subsection, see \cite[Section 3]{AS09} and \cite[Section 4]{CFO10}.
\end{Rmk}

In what follows we compute the singular homology groups in Theorem \ref{thm:computation of LagRFH1} in a special case. Let $Z$ be a closed connected manifold and $Y$ be a connected submanifold. We denote by $\Omega Z$ the based loop space of $Z$. We further abbreviate relative path spaces for $z,\,z'\in Z$,
\bean
P_{z,z'}Z&:=\big\{\gamma\in C^0([0,1],Z)\,|\,(\gamma(0),\gamma(1))=(z,z')\big\},\\
P_{z,Y}Z&:=\big\{\gamma\in C^0([0,1],Z)\,|\,(\gamma(0),\gamma(1))\in \{z\}\x Y\big\},\\
P_{Y}Z&:=\big\{\gamma\in C^0([0,1],Z)\,|\,(\gamma(0),\gamma(1))\in Y\x Y\big\}.
\eea
Here we deal with continuous paths but the homotopy types of the above path spaces do not change  if we consider $W^{1,2}$-, or $C^\infty$-paths instead, see \cite{Pal66,Kli78}. Suppose that $Y$ is contractible to $z\in Z$ in $Z$; that is, there exists a continuous map $F:Y\x I\to Z$ such that 
$$
F(\cdot,0)=\bar z,\quad F(\cdot,1)=i_Y
$$
where $\bar z:Y\to\{z\}$ is a constant map and $i_Y:Y\to Z$ is a canonical inclusion map.

\begin{Prop}\label{prop:homotopy equivalences}
Let $Y$ be contractible to $z\in Z$ in $Z$ as above. Then we have the following homotopy equivalences:
$$
P_YZ\simeq P_{z,Y}Z\x Y\simeq \Omega Z\x Y\x Y.
$$
\end{Prop}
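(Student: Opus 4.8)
The plan is to establish the two homotopy equivalences $P_YZ\simeq P_{z,Y}Z\x Y$ and $P_{z,Y}Z\simeq \Omega Z\x Y$ separately, and then compose. For the first equivalence, I would use the contraction $F:Y\x I\to Z$ to ``retract'' the starting point of a path in $P_YZ$ to $z$ while remembering where it came from. Concretely, define $\Phi:P_YZ\to P_{z,Y}Z\x Y$ by sending $\gamma$ to the pair consisting of the concatenation of the path $t\mapsto F(\gamma(0),t)$ (which runs from $z$ to $\gamma(0)$) with $\gamma$ itself, together with the point $\gamma(0)\in Y$. A candidate homotopy inverse $\Psi$ takes a pair $(\delta,y)$ with $\delta(0)=z$, $\delta(1)\in Y$, and $y\in Y$ to the path obtained by concatenating the reversed contraction path $t\mapsto F(y,1-t)$ (running from $y$ to $z$) with $\delta$; this lands in $P_YZ$ since the new path starts at $y\in Y$ and ends at $\delta(1)\in Y$. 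The compositions $\Psi\circ\Phi$ and $\Phi\circ\Psi$ are homotopic to the identity by the standard ``insert a backtracking segment, then cancel it'' homotopies together with the fact that $F(y,0)=z$, reparametrizing concatenations as needed; all of this is a routine exercise with reparametrization homotopies of paths.

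For the second equivalence $P_{z,Y}Z\simeq \Omega Z\x Y$, I would again exploit $F$. Evaluation at the endpoint gives a map $\mathrm{ev}_1:P_{z,Y}Z\to Y$, $\gamma\mapsto\gamma(1)$, which is a fibration with fiber over $y$ equal to $P_{z,y}Z$. The contraction $F$ provides a section-like trivialization: the map $(\omega,y)\mapsto \omega * \bigl(t\mapsto F(y,t)\bigr)$ from $\Omega Z\x Y$ to $P_{z,Y}Z$ (concatenate a based loop at $z$ with the contraction path from $z$ to $y$) is the candidate equivalence, with homotopy inverse $\gamma\mapsto\bigl(\gamma * (t\mapsto F(\gamma(1),1-t)),\,\gamma(1)\bigr)$. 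Again the verification that these are mutually inverse up to homotopy is the usual cancellation-of-backtracking argument, using $F(y,0)=z$ and $F(\cdot,1)=i_Y$ to identify the relevant concatenations with the original paths.

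Composing, we obtain $P_YZ\simeq P_{z,Y}Z\x Y\simeq \Omega Z\x Y\x Y$, which is the claim. I do not expect a genuine obstacle here: everything reduces to manipulating concatenations of paths via reparametrization homotopies, and the only input needed is the existence of the contraction $F$ with its two boundary conditions. The one point requiring a little care is ensuring all concatenations are performed with a fixed convention (say, the first path on $[0,\tfrac12]$ and the second on $[\tfrac12,1]$) and that the homotopies ``undoing'' a backtracking segment are written down explicitly enough to see they respect the boundary constraints defining $P_YZ$, $P_{z,Y}Z$, and $\Omega Z$; since none of these constraints are violated at any intermediate time, this is straightforward. It may be cleanest to phrase the argument abstractly using the fibration $\mathrm{ev}:P_YZ\to Y\x Y$ and the observation that the contractibility of $Y$ in $Z$ trivializes it, but the hands-on concatenation argument above is entirely adequate.
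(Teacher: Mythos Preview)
Your proposal is correct and follows essentially the same approach as the paper: the paper also constructs explicit maps between $P_{z,Y}Z$ and $\Omega Z\times Y$ by concatenating with the contraction paths $\gamma^y(t)=F(y,t)$ and their reverses, and verifies they are mutual homotopy inverses via the same ``cancel the backtracking segment'' homotopies you describe. The only cosmetic differences are the order of presentation (the paper treats $P_{z,Y}Z\simeq\Omega Z\times Y$ in detail and then says the first equivalence ``follows analogously'') and the concatenation convention; your suggestion to phrase it via the fibration $\mathrm{ev}_1$ is also mentioned by the paper in a remark as an alternative route.
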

\begin{proof}
We define for each $y\in Y$, $\gamma^y$ a path in $Z$ by
$$
\gamma^y(t):=F(y,t),\quad t\in [0,1].
$$
In particular, $\gamma^y(0)=z$ and $\gamma^y(1)=y$. We set
$$
\bar\gamma^y(t):=\gamma^y(1-t),\quad \gamma_r^y(t):=\gamma^y(rt),\quad r\in[0,1].
$$
We define a map $\Phi$ which will give the desired homotopy equivalence. Here we abbreviate $\#$ for the concatenation operation for paths.
\bean
\Phi:P_{z,Y}Z&\pf \Omega Z\x Y\\
u&\longmapsto \big(\bar\gamma^{u(1)}\#u,u(1)\big)
\eea
The map $\Psi$ below will be a homotopical inverse of $\Phi$.
\bean
\Psi:\Omega Z\x Y&\pf P_{z,Y}Z\\
(w,y)&\longmapsto \gamma^y\#w
\eea
Here we consider $\Omega Z$ as a loop space of $Z$ with the base point $z\in Z$. In order to show that $\Psi\circ\Phi$ is homotopic to the identity, we construct a homotopy 
\bean
G:P_{z,Y}Z\x [0,1]&\pf P_{z,Y}Z\\
(u,r)&\longmapsto \gamma_r^{u(1)}\#\bar\gamma_r^{u(1)}\#u
\eea
such that 
$$
G(u,0)=
\left\{\begin{array}{ll}
u(3t)\quad& 0\leq t\leq1/3\\[.5ex]
u(1)& 1/3\leq t\leq 1
\end{array}\right.
,\qquad G(\cdot,1)=\Psi\circ\Phi.
$$
Performing some reparametrizations on $G$ at time $r=0$, we deduce
$$
\Psi\circ\Phi\simeq\Id_{P_{z,Y}Z}.
$$
In a similar vein, using the homotopy 
\bean
R:\Omega Z\x Y\x [0,1]&\pf \Omega Z\x Y\\
(w,y,r)&\longmapsto (\bar\gamma_r^{y}\#\gamma_r^{y}\#w,y)
\eea
such that 
$$
R(w,y,0)=
\left\{\begin{array}{ll}
\big(w(3t),y\big)\quad& 0\leq t\leq1/3\\[.5ex]
\big(w(1),y\big)& 1/3\leq t\leq 1
\end{array}\right.
,\qquad R(\cdot,\cdot,1)=\Phi\circ\Psi,
$$
we obtain after some reparametrizations as before,
$$
\Phi\circ\Psi\simeq\Id_{\Omega Z\x Y}.
$$
This proves $P_{z,Y}Z\simeq \Omega Z\x Y$ and thus the second equivalence is proved. The first equivalence  $P_YZ\simeq P_{z,Y}Z\x Y$ follows  analogously.
\end{proof}

\begin{Cor}\label{cor:homology of P_{S^1}(S^2)}
Let $S^1$ be an embedded circle in $S^2$. Then we have
$$
P_{S^1}S^2\simeq\Omega S^2\x S^1\x S^1.
$$
In particular, we compute
\beqn
\H_n(P_{S^1}(S^2);\Z_2)=
\left\{\begin{array}{ll}
\Z_2\qquad & n=0,\\[.5ex]
\Z_2\oplus\Z_2\oplus\Z_2 & n=1,\\[.5ex]
\Z_2\oplus\Z_2\oplus\Z_2\oplus\Z_2 & \textrm{otherwise}.
\end{array}\right.
\eeq
\end{Cor}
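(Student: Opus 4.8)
The plan is to apply Proposition \ref{prop:homotopy equivalences} with $Z=S^2$ and $Y=S^1$ the embedded circle, and then feed the resulting product through the K\"{u}nneth theorem over the field $\Z_2$. First I would check that the hypothesis of Proposition \ref{prop:homotopy equivalences} holds, i.e. that $S^1$ is contractible in $S^2$. Since $S^2$ is simply connected, the inclusion $i_{S^1}\colon S^1\into S^2$ is null-homotopic; as $S^2$ is path-connected, the null-homotopy may be arranged to terminate at any chosen point $z\in S^2$, which produces a continuous $F\colon S^1\x I\to S^2$ with $F(\cdot,0)=\bar z$ and $F(\cdot,1)=i_{S^1}$. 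Proposition \ref{prop:homotopy equivalences} then gives at once $P_{S^1}S^2\simeq \Omega S^2\x S^1\x S^1$, which is the first assertion, and since homotopy equivalent spaces have isomorphic singular homology it suffices to compute $\H_*(\Omega S^2\x S^1\x S^1;\Z_2)$.

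Next I would recall $\H_*(\Omega S^2;\Z_2)$ for the based loop space. The cleanest route is the path--loop fibration $\Omega S^2\to PS^2\to S^2$ with $PS^2$ contractible, together with the homology Serre spectral sequence $E^2_{p,q}=\H_p(S^2;\H_q(\Omega S^2;\Z_2))\Rightarrow \H_{p+q}(PS^2;\Z_2)$. Because $\H_*(S^2;\Z_2)$ is concentrated in degrees $0$ and $2$, the only potentially nonzero differential is $d^2\colon E^2_{2,q}\to E^2_{0,q+1}$, and contractibility of $PS^2$ forces it to be an isomorphism for every $q\ge 0$; inducting from $\H_0(\Omega S^2;\Z_2)=\Z_2$ yields $\H_q(\Omega S^2;\Z_2)=\Z_2$ for all $q\ge 0$. (Alternatively one may invoke James's computation of the homology of $\Omega\Sigma S^1$.) Thus the Poincar\'e series of $\Omega S^2$ over $\Z_2$ is $(1-t)^{-1}$, while that of $S^1$ is $1+t$.

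Finally I would apply the K\"{u}nneth theorem. Working over the field $\Z_2$ there are no Tor terms, so
\[
\H_*(\Omega S^2\x S^1\x S^1;\Z_2)\;\cong\;\H_*(\Omega S^2;\Z_2)\otimes\H_*(S^1;\Z_2)\otimes\H_*(S^1;\Z_2),
\]
and the Poincar\'e series multiply:
\[
\frac{1}{1-t}\cdot(1+t)^2=\frac{1+2t+t^2}{1-t}=1+3t+4t^2+4t^3+\cdots .
\]
Reading off coefficients gives $\H_0(P_{S^1}S^2;\Z_2)=\Z_2$, $\H_1(P_{S^1}S^2;\Z_2)=\Z_2\oplus\Z_2\oplus\Z_2$, and $\H_n(P_{S^1}S^2;\Z_2)=\Z_2\oplus\Z_2\oplus\Z_2\oplus\Z_2$ for $n\ge 2$, as claimed.

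I do not expect a genuine obstacle here: all the real content sits in Proposition \ref{prop:homotopy equivalences}, which is already established, so the computation is essentially bookkeeping. The only two points that deserve care are (a) verifying the \emph{contractibility} hypothesis of the proposition — not merely triviality of $\pi_1(S^2)$, but the existence of the homotopy $F$ ending at a fixed basepoint, which follows from simple connectivity plus path-connectedness — and (b) correctly recalling $\H_*(\Omega S^2;\Z_2)=\Z_2$ in every nonnegative degree; once these are in hand, the K\"{u}nneth expansion and the extraction of ranks are routine.
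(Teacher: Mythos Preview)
Your proposal is correct and follows essentially the same route as the paper: the corollary is an immediate consequence of Proposition~\ref{prop:homotopy equivalences} applied to $Y=S^1\subset Z=S^2$, after which the homology computation is routine via K\"unneth. The paper does not spell out the details (it records the result as a direct corollary and offers, in a subsequent remark, an alternative computation via the Leray--Serre spectral sequence of the evaluation fibrations), but your verification of the contractibility hypothesis, your derivation of $\H_*(\Omega S^2;\Z_2)$ from the path--loop fibration, and your K\"unneth bookkeeping are all sound and amount to exactly the intended argument.
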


\begin{Rmk}
In an alternative way, one can directly compute the singular homology of $P_{S^1}S^2$ by means of the {\em Leray-Serre spectral sequence}. We consider the evaluation map $ev_1:P_{z,S^1}S^2\to S^1$ defined by $ev_1(u)=u(1)$. Then we have a fibration 
$$
\Omega S^2\into P_{z,S^1}S^2\stackrel{ev_1}{\pf} S^1.
$$
We note that the spectral sequence for this fibration degenerates at the second page for dimension reasons, i.e. $E^\infty=E^2$. Even though $S^1$ is not simply-connected, the $E^2$-page has a simple formula. Since $S^1$ is contractible in $S^2$, the above fibration has trivial monodromy
$$
\pi_1(S^1)\to\mathrm{Aut}\big(\H_n(\Omega S^2)\big),\quad \ell\mapsto\Id_{\H_n(\Omega S^2)},\quad \forall\ell\in\pi_1(S^1),\;n\in\N\cup\{0\},
$$
and thus 
$$
E^2_{i,j}\cong\H_i\big(S^1;H_j(\Omega S^2;\Z_2)\big)\cong\H_i(S^1;\Z_2)\otimes \H_j(\Omega S^2;\Z_2).
$$
Therefore we have 
$$
\H_n(P_{z,S^1}S^2;\Z_2)\cong\bigoplus_{i+j=n} E_{i,j}^\infty\cong \bigoplus_{i+j=n}\H_i(S^1;\Z_2)\otimes\H_j(\Omega S^2;\Z_2).
$$
Then exactly the same arguments go through for a fibration 
$$
P_{z,S^1}S^2\into P_{S^1}S^2\stackrel{ev_0}{\pf} S^1
$$
where $ev_0$ is the evaluation map at time zero. Therefore we derive 
$$
\H_n(P_{S^1}S^2;\Z_2)\cong\bigoplus_{i+j+k=n} \H_i(S^1;\Z_2)\otimes \H_j(S^1;\Z_2)\otimes \H_k(\Omega S^2;\Z_2).
$$
\end{Rmk}

Therefore Theorem \ref{thm:computation of LagRFH1} and Proposition \ref{prop:homotopy equivalences} result in the following.
\begin{Thm}\label{thm:computation of LagRFH2}
Let $Q$ and $M$ be as above and $Q$ be contractible to a point in $M$.
\bean
\RFH_*(\Sigma,N^*Q,T^*M)&=\bigoplus_{*_1+*_2+*_3=*}\big(\H_{*_1}(\Omega M,\Z_2)\otimes\H_{*_2}(Q;\Z_2)\otimes\H_{*_3}(Q;\Z_2)\big)\\
&\bigoplus_{\star_1+\star_2+\star_3=-*+2d-n+1}\!\!\!\!\!\!\!\!\!\!\!\!\big(\H^{\star_1}(\Omega M;\Z_2)\otimes\H^{\star_2}(Q;\Z_2)\otimes\H^{\star_3}(Q;\Z_2)\big).
\eea
In particular if $S^1$ is an embedded circle in $S^2$,
\beqn
\RFH_*(S^*S^2,N^*S^1,T^*S^2)=\Z_2\oplus\Z_2\oplus\Z_2\oplus\Z_2,\quad *\in\Z\setminus\{0,1\}.
\eeq
\end{Thm}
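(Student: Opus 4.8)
The plan is to combine Merry's computation (Theorem \ref{thm:computation of LagRFH1}) with the path-space homotopy equivalence of Proposition \ref{prop:homotopy equivalences}, and then to unravel the resulting expression with the K\"unneth theorem over the field $\Z_2$.

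First I would observe that, since $Q$ is contractible to a point in $M$, Proposition \ref{prop:homotopy equivalences} applied with $Z=M$ and $Y=Q$ yields $P_Q M\simeq\Om M\x Q\x Q$, so the $\Z_2$-(co)homology of $P_Q M$ is that of the product. As $\Z_2$ is a field, the K\"unneth theorem carries no Tor-term, whence in each degree
$$
\H_*(P_Q M;\Z_2)\cong\bigoplus_{*_1+*_2+*_3=*}\H_{*_1}(\Om M;\Z_2)\otimes\H_{*_2}(Q;\Z_2)\otimes\H_{*_3}(Q;\Z_2),
$$
and likewise in cohomology: the cross product $\H^{\star_1}(\Om M;\Z_2)\otimes\H^{\star_2}(Q;\Z_2)\otimes\H^{\star_3}(Q;\Z_2)\to\H^{\star_1+\star_2+\star_3}(P_Q M;\Z_2)$ is an isomorphism because $Q$ is a closed manifold, so $Q\x Q$ has finite-dimensional $\Z_2$-cohomology in each degree. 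Feeding these two identities into Theorem \ref{thm:computation of LagRFH1} produces the first displayed formula of Theorem \ref{thm:computation of LagRFH2}, valid for $*\in\Z\setminus\{0,1\}$.

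For the distinguished case $M=S^2$, $Q=S^1$ one has $d=1$ and $n=2$, so the degree shift in Theorem \ref{thm:computation of LagRFH1} is $-*+2d-n+1=1-*$, and therefore
$$
\RFH_*(S^*S^2,N^*S^1,T^*S^2)\cong\H_*(P_{S^1}S^2;\Z_2)\oplus\H^{1-*}(P_{S^1}S^2;\Z_2).
$$
By Corollary \ref{cor:homology of P_{S^1}(S^2)} the space $P_{S^1}S^2\simeq\Om S^2\x S^1\x S^1$ has $\H_k(P_{S^1}S^2;\Z_2)=\Z_2^{\oplus 4}$ for $k\geq 2$ (and of course $\H_k=0$ for $k<0$), and since these $\Z_2$-homology groups are finite-dimensional in each degree one has $\H^k(P_{S^1}S^2;\Z_2)\cong\H_k(P_{S^1}S^2;\Z_2)$ as $\Z_2$-vector spaces. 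Hence for $*\geq 2$ the cohomological summand lies in degree $1-*\leq-1$ and vanishes, leaving $\H_*(P_{S^1}S^2;\Z_2)=\Z_2^{\oplus 4}$; for $*\leq-1$ the homological summand vanishes while $1-*\geq 2$, so the cohomological summand contributes $\Z_2^{\oplus 4}$. In either case $\RFH_*(S^*S^2,N^*S^1,T^*S^2)=\Z_2\oplus\Z_2\oplus\Z_2\oplus\Z_2$ for every $*\neq0,1$, as claimed.

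The mathematical content is entirely carried by the two results being quoted, so what remains is bookkeeping. The points that require care are: checking that the hypothesis $d\leq n/2$ of Theorem \ref{thm:computation of LagRFH1} is satisfied --- for $S^1\subset S^2$ this is the borderline equality $1\leq 1$, which is precisely why the statement is phrased on the unit cotangent bundle $S^*S^2$ --- and verifying that the degree shift $2d-n+1$ together with the vanishing of $\H_*(P_{S^1}S^2;\Z_2)$ in negative degrees arranges for the homological and cohomological summands of the right-hand side to never overlap and never simultaneously vanish on $\Z\setminus\{0,1\}$, so that the total is the constant $\Z_2^{\oplus 4}$. I do not anticipate any genuine difficulty beyond this.
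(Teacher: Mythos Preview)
Your proposal is correct and follows exactly the route the paper indicates: the paper's entire proof is the sentence ``Therefore Theorem \ref{thm:computation of LagRFH1} and Proposition \ref{prop:homotopy equivalences} result in the following,'' and you have simply spelled out the K\"unneth step and the degree bookkeeping that this sentence leaves implicit. Your verification of the $S^1\subset S^2$ case is accurate; the only inessential remark is your parenthetical about why the statement is phrased on $S^*S^2$ --- the choice of $S^*S^2$ is merely a concrete representative of a fiberwise starshaped $\Sigma$ and has nothing to do with the inequality $d\leq n/2$.
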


\subsection{Robbin-Salamon index}\quad\\[-1.5ex]

We denote by $\LL(\R^{2n})$ the Grassmanian manifold of all Lagrangian subspaces in $(\R^{2n},\om_0=dx\wedge dy)$. Let $V\in\LL(\R^{2n})$ and $\Lambda:[0,T]\to\LL(\R^{2n})$. We choose $W\in\LL(\R^{2n})$ a Lagrangian complement of $\Lambda(t)$ for $t\in[0,T]$. For $v\in\Lambda(t)$ and small $\epsilon$ we can find a unique $w(\epsilon)\in W$ such that $v+w(\epsilon)\in\Lambda(t+\epsilon)$. The {\em crossing form} at time $t\in[0,T]$ is defined by
\bean
\Gamma(\Lambda,V,t):\Lambda(t)\cap V&\pf\R\\
v&\longmapsto \frac{d}{d\epsilon}\Big|_{\epsilon=0}\om_0(v,w(t+\epsilon)).
\eea
It is independent of the choice of $W$. A crossing time $t\in[0,T]$, i.e. $\Lambda(t)\cap V\neq\{0\}$, is said to be regular if $\Gamma(\Lambda,V,t)$ is nondegenerate. Since regular crossings are isolated, the number of crossings for a regular path which has only regular crossings is finite.  Thus for a regular path $\Lambda(t)\in\LL(\R^{2n})$ and $V\in\LL(\R^{2n})$, the {\em Robbin-Salamon index} \cite{RS93} can be defined as below.
$$
\mu_{RS}(\Lambda,V):=\frac{1}{2}\sign\Gamma(\Lambda,V,0)+\sum_{0<t<T}\sign\Gamma(\Lambda,V,t)+\frac{1}{2}\sign\Gamma(\Lambda,V,T)
$$
where the sum is taken over all crossings $t\in[0,T]$ and $\sign$ denotes the signature of the crossing form. Since we can always perturb a Lagrangian path to be regular and the Robbin-Salamon index is invariant under homotopies with fixed end points, the Robbin-Salamon index for nonregular paths also can be defined. Let $\Psi:[0,T]\to\Sp(2n)$ be a path of symplectic matrices with $\Psi(0)=\Id_{\R^{2n}}$ and $\det(\Id_{\R^{2n}}-\Psi(T))\neq0$. Then the {\em Conley-Zehnder index} of $\Psi$ is defined by
$$
\mu_{CZ}(\Psi):=\mu_{RS}(\mathrm{graph\,}\Psi,\Delta)
$$
where $\Delta$ is the diagonal of $\R^n\x\R^n$.

Returning to the regularized PCRTBP, let $(x,T)$ be a solution of \eqref{eq:characteristic chord eq with conormal boundary condition}. 
Then $(x_\RRR,T)$ and $(x^m,mT)$ for $m\in\N$ defined by
\beqn
x_\RRR(t):=\RRR x(T-t),\quad x^{2k}:=\underbrace{x_\RRR\#\cdots \#x\#x_\RRR\#x}_{2k},\quad x^{2k+1}:=\underbrace{x\#\cdots\#x\# x_\RRR\#x}_{2k+1}
\eeq
solve \eqref{eq:characteristic chord eq with conormal boundary condition} as well. Now we associate a  Robbin-Salamon index to each solution of \eqref{eq:characteristic chord eq with conormal boundary condition}. We first symplectically trivialize the hyperplane field $\ker\lambda_{\Sigma}$ (contact structure) by a pair of global sections 
$$
X_1=(\frac{\xi\x\eta}{|\xi\x\eta|}-\frac{(\xi\x\eta)\cdot n_\eta}{|\xi\x\eta|\,\eta\cdot n_\eta}\eta)\cdot\p_\eta,\quad X_2=-\frac{(\xi\x\eta)\cdot n_\xi}{|\xi\x\eta|\,\eta\cdot f_\eta}\eta\cdot\p_\eta+\frac{(\xi\x\eta)}{|\xi\x\eta|}\cdot\p_\xi
$$ 
where $n_\Sigma=(n_\xi\p_\xi,n_\eta\p_\eta)$ is the outward pointing normal vector field on $\Sigma$. We denote the induced global symplectic trivialization by
$$
\Phi(\zeta):\ker\lambda_{\zeta}\pf \R^2,\quad \zeta=(\xi,\eta)\in\Sigma.
$$
We note that $\Phi$ is a vertical preserving symplectization trivialization and maps $T\RRR$ to the reflection about the $X_2$-axis, i.e.
$$
\Phi(\zeta)[T_\zeta^vT^*S^2\cap \ker\lambda_\zeta]=(0)\x\R
$$
where $T^vT^*S^2:=\ker T\pi$ be the vertical subbundle of $TT^*M$ and 
$$
\Phi(\RRR(\zeta))\circ T\RRR_{\zeta}\circ\Phi(\zeta)^{-1}=\left(\begin{array}{cc} -1 & 0\\
0 & 1 \end{array}\right)=:R
$$
We abbreviate the fixed locus of $R$ by
$$
V:=\Fix R=\R\x(0).
$$
Then the linearization $T\phi_Q^t$ of the flow of $X_Q$ gives a path in $\Sp(\R^{2})$
$$
\Psi_x(t):=\Phi(x(t))T\phi_Q^t(x(0))\Phi(x(0))^{-1},\quad t\in[0,T],
$$
and the {\em Robbin-Salamon index} of $(x,T)$ a solution of  \eqref{eq:characteristic chord eq with conormal boundary condition} is defined by
\beq\label{eq:Robbin-Salamon index}
\mu_{RS}(x,T):=\mu_{RS}({\Psi_x}(t)V,V).
\eeq
We remark that the Robbin-Salamon index does not depend on the choice of  vertical preserving symplectic trivialization, see \cite[Subsection 5.1]{Oh97} or \cite[Section 3]{APS08}.

\begin{Lemma}\label{prop:RS-index of the partner}
If $(x,T)$ is a solution of \eqref{eq:characteristic chord eq with conormal boundary condition}, it holds that
$$
\mu_{RS}(x,T)=\mu_{RS}(x_\RRR,T).
$$
\end{Lemma}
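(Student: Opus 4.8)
The plan is to conjugate the linearised flow along $x_\RRR$ back to that along $x$ using the anti-symplectic involution $\RRR$, translate this into a relation between Robbin--Salamon indices via the behaviour of $\mu_{RS}$ under anti-symplectic maps and time-reversal, and then clear up the left-over discrepancy by a short computation in $\Sp(2,\R)$ (recall $\ker\lambda_\Sigma$ is $2$-dimensional, so we are working with Lagrangian lines in $\R^2$ throughout).

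First I would collect the consequences of $\RRR$ being anti-symplectic with $Q\circ\RRR=Q$: namely $\RRR_*X_Q=-X_Q$ and $\phi_Q^t\circ\RRR=\RRR\circ\phi_Q^{-t}$, whence $x_\RRR(t)=\RRR x(T-t)$ is again a solution of \eqref{eq:characteristic chord eq with conormal boundary condition} with $x_\RRR(0)=x(T)$, $x_\RRR(T)=x(0)$, and $\phi_Q^{-t}(x(T))=x(T-t)$. Combining these with the equivariance $\Phi(\RRR\zeta)\circ T\RRR_\zeta=R\circ\Phi(\zeta)$ of the trivialisation (and $\RRR x(T)=x(T)$) together with the cocycle identity $T\phi_Q^{T-t}(x(0))\circ(T\phi_Q^T(x(0)))^{-1}=T\phi_Q^{-t}(x(T))$ yields, after a short manipulation,
$$\Psi_{x_\RRR}(t)=R\,\Psi_x(T-t)\,\Psi_x(T)^{-1}\,R,\qquad t\in[0,T].$$
(The factor $\Psi_x(T)^{-1}$ is genuinely present; it is forced by $\Psi_{x_\RRR}(0)=\Id$, and is the only real difficulty below.)

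Plug this into $\mu_{RS}(x_\RRR,T)=\mu_{RS}(\Psi_{x_\RRR}(\cdot)V,V)$. Since $R$ is anti-symplectic with $RV=V$, the crossing form picks up a sign, so $\mu_{RS}(R\Lambda(\cdot),V)=\mu_{RS}(R\Lambda(\cdot),RV)=-\mu_{RS}(\Lambda(\cdot),V)$; likewise time-reversal gives $\mu_{RS}(\Lambda(T-\cdot),V)=-\mu_{RS}(\Lambda(\cdot),V)$. Applying both (and $RV=V$ once more) reduces the statement to the purely formal claim
$$\mu_{RS}\big(\Psi(\cdot)\Psi(T)^{-1}V,\ V\big)=\mu_{RS}\big(\Psi(\cdot)V,\ V\big)$$
for any symplectic path $\Psi$ on $[0,T]$ with $\Psi(0)=\Id$; here $\Psi=\Psi_x$.

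For this I would argue as follows. Set $M(t)=\Psi(t)\Psi(T)^{-1}$. At a crossing time $t_0$ (so $M(t_0)V=V$, and $V$ is $1$-dimensional) write $M(t_0)v=\lambda v$ for $v\in V$, $\lambda\neq0$; a direct computation of the crossing forms gives $\Gamma\big(M(\cdot)V,V,t_0\big)(v)=\lambda^{-1}\om_0(v,\dot M(t_0)v)$ and $\Gamma\big(M(\cdot)^{-1}V,V,t_0\big)(v)=-\lambda\,\om_0(v,\dot M(t_0)v)$, which have opposite sign at every (common) crossing, so $\mu_{RS}(M(\cdot)V,V)=-\mu_{RS}(M(\cdot)^{-1}V,V)$. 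On the other hand the concatenated loop $\Psi\ast\big(\Psi(T)\Psi(\cdot)^{-1}\big)$ in $\Sp(2,\R)$, based at $\Id$, is null-homotopic (deform $\Psi$ to the constant path through paths starting at $\Id$), hence so is the corresponding loop of Lagrangian lines, and additivity of $\mu_{RS}$ over the two halves forces $\mu_{RS}(\Psi(\cdot)V,V)+\mu_{RS}\big(\Psi(T)\Psi(\cdot)^{-1}V,V\big)=0$. Since $\big(\Psi(\cdot)\Psi(T)^{-1}\big)^{-1}=\Psi(T)\Psi(\cdot)^{-1}$, these two identities combine to give exactly $\mu_{RS}(\Psi(\cdot)\Psi(T)^{-1}V,V)=\mu_{RS}(\Psi(\cdot)V,V)$, and the Lemma follows. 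The delicate point is not any single step but the combined sign bookkeeping — four reversals (anti-symplectic, time-reversal, path inversion, loop additivity) — together with the crossing-form identity relating a path of Lagrangian lines to its pointwise inverse.
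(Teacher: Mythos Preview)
Your argument is correct, and your explicit handling of the factor $\Psi_x(T)^{-1}$ is precisely the point that the paper's proof glosses over. The paper proceeds exactly as you do up to the conjugation identity, but then writes (in effect) $\Psi_{x_\RRR}(t)V=R\,\Psi_x(T-t)\,RV$ and finishes with the two sign flips from anti-symplecticity of $R$ and time-reversal. As you observe, the correct relation is $\Psi_{x_\RRR}(t)=R\,\Psi_x(T-t)\,\Psi_x(T)^{-1}R$, and the paper's simpler identity would force $\Psi_x(T)V=V$, which is ruled out by nondegeneracy. So the two proofs share the same skeleton; what you add is the genuine content.

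Your step 5 is the right fix, and both ingredients check out. The crossing-form identity $\mu_{RS}(M(\cdot)V,V)=-\mu_{RS}(M(\cdot)^{-1}V,V)$ really uses that $V$ is one-dimensional (so $M(t_0)|_V$ is scalar and the two signatures differ by $-\mathrm{sign}(\lambda^2)=-1$); this is fine here since $\ker\lambda_\Sigma$ is a symplectic $2$-plane. The null-homotopy of $\Psi\ast(\Psi(T)\Psi(\cdot)^{-1})$ is clear (shrink $\Psi$ to the constant path through $\Psi^{(s)}(t)=\Psi(st)$), and then concatenation additivity plus homotopy invariance of $\mu_{RS}$ on the induced Lagrangian loop give $\mu_{RS}(\Psi(\cdot)V,V)+\mu_{RS}(\Psi(T)\Psi(\cdot)^{-1}V,V)=0$. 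Combining with the first identity yields $\mu_{RS}(\Psi(\cdot)\Psi(T)^{-1}V,V)=\mu_{RS}(\Psi(\cdot)V,V)$, as needed.
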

\begin{proof}
Since $x_\RRR(t)=\RRR x(T-t)$, we have
\bean
\Psi_{x_\RRR}(t)&=\Phi(x_\RRR(t))\circ T\phi_X^t(x_\RRR(0))\circ\Phi(x_\RRR(0))^{-1}\\
&=R\circ\Phi(x(T-t))\circ T\RRR_{x_\RRR(t)}\circ T\phi_X^t(x_\RRR(0))\circ T\RRR_{x_\RRR(0)}\circ\Phi(x(T))^{-1}\circ R\\
&=R\circ\Phi(x(T-t))\circ T\phi_X^{-t}(x(T))\circ\Phi(x(T))^{-1}\circ R.
\eea
We observe that 
$$
T\phi_X^{-t}(x(T))=T\phi_X^{T-t}(x(0))\circ [T\phi_X^T(x(0))]^{-1}
$$
and thus
$$
\Psi_{x_\RRR}(t)=R\circ \Psi_x(T-t)\circ [\Psi_x(T)]^{-1}\circ R.
$$
Since
$$
\mathrm{Id}=\Psi_x(T-t)\circ[\Psi_x(T-t)]^{-1}
$$
is homotopic to
$$
\Psi_x(0)\circ[\Psi_x(T-t)]^{-1}\#\Psi_x(T-t)\circ[\Psi_x(T)]^{-1},
$$
we obtain
$$
0=\mu_{RS}(\Psi_{x_\RRR}(t)V,V)+\mu_{RS}(R\circ [\Psi_x(T-t)]^{-1}\circ R V,V)
$$
Therefore we conclude that
\bean
\mu_{RS}(x_\RRR,T)&=\mu_{RS}(\Psi_{x_\RRR}(t)V,V)=-\mu_{RS}(R\circ [\Psi_x(T-t)]^{-1} R V,V)\\
&=\mu_{RS}([\Psi_x(T-t)]^{-1} R V,R V)=-\mu_{RS}(\Psi_x(T-t)V,V)\\
&=\mu_{RS}(\Psi_x(t)V,V)=\mu_{RS}(x,T).
\eea
\end{proof}
In \cite[Sectoion 6]{LZZ06}, Long-Zhang-Zhu proved the limit  
$$
\hat\mu_{RS}(x,T):=\lim_{m\to\infty}\frac{\mu_{RS}(x^{m},mT)}{m}.
$$
exists\footnote{Actually they used the $\mu_1$-index which is different from $\mu_{RS}$-index by a constant $n/2$, see \cite[Theorem 3.1]{LZ00}.} and
furthermore they proved the following identity 
$$
\hat\mu_{RS}(x,T)=\frac{1}{2}\hat\mu_{CZ}(x^2,2T)
$$
where 
$$
\hat\mu_{CZ}(x^2,2T):=\lim_{m\to\infty}\frac{\mu_{CZ}(x^{2m},2mT)}{m}.
$$
for a periodic solution $(x^2,2T)$, see \cite{Lon02}. In consequence, we obtain the following proposition which plays a crucial role in proving Theorem A.
\begin{Prop}\label{prop:index inequalities}
If $S$, the double cover of $\Sigma$, is dynamically convex, then
$$
\hat\mu_{RS}(x,T)>\frac{1}{2}.
$$
\end{Prop}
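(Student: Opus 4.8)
The plan is to combine the identity of Long--Zhang--Zhu recalled above,
$$
\hat\mu_{RS}(x,T)=\tfrac{1}{2}\,\hat\mu_{CZ}(x^2,2T),
$$
with the dynamical convexity hypothesis on the double cover $S$. First I would observe that, since $(x,2T)$ does not meet the north pole, the symmetric periodic orbit $x_\RRR\#x$ on $\Sigma$ lifts to a closed Reeb orbit on $S\subset\R^4$: indeed, the covering map $\Pi:S\to\Sigma$ intertwines the Hamiltonian/Reeb dynamics, so the $2$-fold iterate $(x^2,2T)=(x_\RRR\#x,2T)$, which is an honest periodic orbit on $\Sigma$, has one (or two) periodic Reeb lifts $\tilde x$ on $S$. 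The index relevant to $S$ is the Conley--Zehnder index $\mu_{CZ}(\tilde x)$, and a standard fact (the vertical-preserving trivialization $\Phi$ is compatible with $\Pi$, cf. \cite{HP08,AFFvK12}) is that this Conley--Zehnder index on $S$ agrees with the $\mu_{CZ}$ of the period-$2T$ orbit $x^2$ on $\Sigma$ computed with the trivialization $\Phi$; likewise for all iterates, so $\hat\mu_{CZ}(x^2,2T)=\hat\mu_{CZ}(\tilde x)$ where the right side is the mean Conley--Zehnder index of the closed Reeb orbit $\tilde x$ on $S$.

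Next I would use dynamical convexity: by hypothesis every periodic Reeb orbit on $S$, and in particular every iterate of $\tilde x$, has $\mu_{CZ}\ge 3$. Hence
$$
\hat\mu_{CZ}(\tilde x)=\lim_{m\to\infty}\frac{\mu_{CZ}(\tilde x^{m})}{m}\ \ge\ \liminf_{m\to\infty}\frac{3}{m}=0,
$$
which only gives $\hat\mu_{RS}(x,T)\ge 0$ and is not yet enough. To get the strict bound $>\tfrac12$ one must exploit that the mean index is genuinely positive and not just nonnegative. The point is that the mean index of a closed orbit is a homogeneous invariant: $\hat\mu_{CZ}(\tilde x^{k})=k\,\hat\mu_{CZ}(\tilde x)$, and more importantly $\mu_{CZ}(\tilde x^{m})$ stays within a bounded distance ($\le n-1=1$ in dimension $2n=4$) of $m\,\hat\mu_{CZ}(\tilde x)$ by the Long--Zhu/Bott-type iteration inequalities \cite{Lon02}. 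Combining $\mu_{CZ}(\tilde x^{m})\le m\,\hat\mu_{CZ}(\tilde x)+1$ with $\mu_{CZ}(\tilde x^{m})\ge 3$ for every $m$ forces $m\,\hat\mu_{CZ}(\tilde x)\ge 2$ for all $m$, hence $\hat\mu_{CZ}(\tilde x)>0$; but to sharpen $0$ to the quantitative threshold I would instead use the standard consequence that dynamical convexity implies $\hat\mu_{CZ}\ge 2$ for every closed Reeb orbit (this is exactly the mean-index analogue of $\mu_{CZ}\ge 3$ and is how it is used in \cite{HWZ98}; one takes $m\to\infty$ in $\mu_{CZ}(\tilde x^m)\ge 3$ together with $|\mu_{CZ}(\tilde x^m)-m\hat\mu_{CZ}(\tilde x)|\le 1$ to get $m\hat\mu_{CZ}(\tilde x)\ge 2$, i.e. $\hat\mu_{CZ}(\tilde x)\ge 2$). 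Feeding this into the Long--Zhang--Zhu identity yields
$$
\hat\mu_{RS}(x,T)=\tfrac12\,\hat\mu_{CZ}(x^2,2T)=\tfrac12\,\hat\mu_{CZ}(\tilde x)\ \ge\ 1\ >\ \tfrac12 .
$$

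I expect the main obstacle to be the bookkeeping in the second paragraph: carefully checking that the Conley--Zehnder index of the Reeb lift $\tilde x$ on $S\subset\R^4$ (with the standard trivialization of $\xi=\ker\alpha$) really coincides with $\mu_{CZ}(x^2,2T)$ computed from the trivialization $\Phi$ on $\Sigma$ — i.e. that the covering $\Pi$ does not introduce an index shift — and dealing with the two-valuedness of the lift ($x^2$ may lift to one orbit traversed once or to two orbits, depending on a $\Z/2$ monodromy), though the mean index is insensitive to which case occurs. A secondary technical point is justifying the passage from the pointwise bound $\mu_{CZ}(\tilde x^m)\ge3$ to the mean-index bound $\hat\mu_{CZ}\ge2$; this is classical (\cite{Lon02}, Chapter 8, via the iteration inequality $\mu_{CZ}(\tilde x^m)\le m\hat\mu_{CZ}(\tilde x)+n-1$ with $n=2$), so I would simply cite it rather than reprove it.
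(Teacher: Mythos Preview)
Your overall strategy---Long--Zhang--Zhu identity plus dynamical convexity of $S$---is exactly the paper's, but two points make the argument break down as written.

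\textbf{The lift of $x^2$ need not be closed.} Since $\Pi:S\to\Sigma$ is the nontrivial double cover $S^3\to\R P^3$, the symmetric periodic orbit $(x^2,2T)$ on $\Sigma$ lifts to a closed Reeb orbit on $S$ only when its class in $\pi_1(\Sigma)\cong\Z/2$ is trivial. This is precisely the type~II case; for a type~I symmetric orbit the lift of $x^2$ is an arc joining two antipodal points, and the first closed lift is that of $(x^4,4T)$. Your parenthetical ``$x^2$ may lift to one orbit traversed once or to two orbits'' does not cover this possibility, and the claim that ``the mean index is insensitive to which case occurs'' is false in the relevant sense: if $\tilde x$ denotes the closed lift, then $\hat\mu_{CZ}(\tilde x)=\hat\mu_{CZ}(x^2,2T)$ in the type~II case but $\hat\mu_{CZ}(\tilde x)=\hat\mu_{CZ}(x^4,4T)=2\,\hat\mu_{CZ}(x^2,2T)$ in the type~I case. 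This is why the paper works with $(x^4,4T)$ from the outset.

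\textbf{The crude iteration bound gives only a non-strict inequality.} Once you pass to $(x^4,4T)$, your inequality $\mu_{CZ}(\tilde x)\le \hat\mu_{CZ}(\tilde x)+1$ together with $\mu_{CZ}(\tilde x)\ge 3$ yields only $\hat\mu_{CZ}(x^4,4T)\ge 2$, hence
\[
\hat\mu_{RS}(x,T)=\tfrac12\,\hat\mu_{CZ}(x^2,2T)=\tfrac14\,\hat\mu_{CZ}(x^4,4T)\ \ge\ \tfrac12,
\]
which is not the strict bound the proposition asserts. The paper closes this gap by invoking Long's Bott-type iteration \emph{formulas} (not just the inequality $|\mu_{CZ}(\gamma^m)-m\hat\mu_{CZ}(\gamma)|\le 1$) to obtain $\hat\mu_{CZ}(x^4,4T)>2$; the footnote spells this out for the elliptic case, and the hyperbolic cases are immediate since then $\hat\mu_{CZ}=\mu_{CZ}\ge 3$ (positively hyperbolic orbits even have $\mu_{CZ}\ge 4$). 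So the missing ingredient is a short case analysis on the linearized return map of the lifted orbit, not merely the uniform iteration estimate.
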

\begin{proof}
We note that $(x^4,4T)$ is a periodic Reeb orbit on $(S,\alpha)$. It can be readily verified that $\hat\mu_{CZ}(x^4,4T)>2$ by the assumption $\mu(x^4,4T)\geq 3$ and the index iteration formulas, see \cite[Section 8]{Lon02}.\footnote{ For instance if $(x^4,T^4)$ is an elliptic periodic orbit on $S$, $\hat\mu_{CZ}(x^4,4T)=\mu_{CZ}(x^4,4T)-1+\theta$ for some $\theta\in(0,1)$ and thus $\hat\mu_{CZ}(x^4,4T)>2$.} Thus we have 
$$
\lim_{m\to\infty}\frac{\mu_{RS}(x^{2m},2mT)}{m}=\hat\mu_{RS}(x^2,2T)=\frac{1}{2}\hat\mu_{CZ}(x^4,4T)>1
$$
and $\hat\mu_{RS}(x,T)>1/2$ is proved since the limit $\hat\mu_{RS}(x,T)$ also exists.
\end{proof}

\subsection{Proof of Theorem B}\quad\\[-1.5ex]

Since the Lagrangian Rabinowitz Floer homology \eqref{eq:computation of LagRFH} is different from the singular homology $\H_*(\Sigma\cap N^*S^1;\Z_2)$, there exists a nontrivial solution $(x,\eta)$ of 
$$
\dot x=\eta X_Q(x(t)),\quad x(t)\in\Sigma=Q^{-1}(\mu^2/2),\quad (x(0),x(1))\in\Fix\RRR\x\Fix\RRR,\quad \forall t\in[0,1].
$$
Assume on the contrary that $(x_\eta^2,2\eta)$ is the only (geometrically distinct) symmetric periodic orbit. Since $\mu_{RFH}(x,\eta)=\mu_{RS}(x_\eta,\eta)+1/2$, 
$$
\lim_{m\to\infty}\frac{\mu_{RFH}(x^{m},m\eta)}{m}>1/2
$$ 
due to Proposition \ref{prop:index inequalities} and thus there exist $r,\,N\in\N$, $N\geq 4$ such that
$$
\mu_{RFH}(x_\eta^m,m\eta)>\frac{m}{2}+\frac{m}{r},\quad \textrm{for all}\;\;\;m\geq N.
$$
We set 
$$
P_1:=\min\Big\{n\in\N\,\Big|\,\frac{N}{2}+\frac{N}{r}\leq n\Big\}
$$
and
$$
P_2:=\max\{\mu_{RFH}(x^\ell,\ell\eta)\,|\,\ell<N\}
$$
By definition, 
$$
\mu_{RFH}(x^m,m\eta)>P_1,\quad m\geq N.
$$
We abbreviate
$$
J=\#\{(x^\ell,\ell\eta)\,|\,\ell<N,\,\mu_{RFH}(x^\ell,\ell \eta)\in[P_1,P_2]\}\in\N\cup\{0\}.
$$
Since $\mu_{RFH}(x^m,m\eta)=\mu_{RFH}((x_\RRR)^m,m\eta)$ for all $m\in\Z$ due to Proposition \ref{prop:RS-index of the partner} and 
$$
\mu_{RFH}(x^{N+2rJ},(N+2rJ)\eta)>P_1+rJ+2J,
$$
we obtain
\bean
\dim_{\Z/2}\bigoplus_{q\in[P_1,P_1+rJ+2J]}\!\!\!\!\!\!\!\!\CF_q(\AA^H,f)
&\leq 2J
+2\#\{(x^N,N\eta),\dots (x^{N+2rJ-1},(N+2rJ-1)\eta)\}\\
&=2J+4rJ.
\eea
But on account of \eqref{eq:computation of LagRFH}, we have a contradictory inequality
$$
\dim_{\Z/2}\bigoplus_{q\in[P_1,P_1+rJ+2J]}\!\!\!\!\!\!\!\!\CF_q(\AA^H,f)\geq 4(rJ+2J+1)
$$
and this proves the theorem.
 \hfill$\square$


\subsubsection*{Acknowledgments} {\em This work has its origin in inspiring  discussions with my advisor Urs Frauenfelder during the East Asia Symplectic Conference 2011 at KIAS. I am grateful to Urs Frauenfelder and Otto van Koert for fruitful discussions. I would like to thank Fabian Ziltener for inviting me to the geometry seminar at KIAS and for his helpful comments. I also thanks Will J. Merry for sending me his thesis.}



\begin{thebibliography}{99}
\bibitem[Are63]{Are63} R.F. Arenstorf, {\em Periodic solutions of the restricted three body prblem repesenting analytic continuations of Keplerian Elliptic motions}, Am. J. Math. \textbf{85}, 27--35.
\bibitem[AFvKP12]{AFvKP12} P. Albers, U. Frauenfelder, O. van Koert, and G. Paternain {\em The contact geometry of the restricted 3-body problem},  Comm. Pure Appl. Math. (2012) 229--263.
\bibitem[AFFHvK11]{AFFHvK11} P. Albers, J. Fish, U. Frauenfelder, H. Hofer, O. van Koert, {\em Global surfaces of section in the planar restricted 3-body problem}, Archive for Rational Mechanics and Analysis, (2012), 273--284.
\bibitem[AFFvK12]{AFFvK12} P. Albers, J. Fish, U. Frauenfelder, O. van Koert, {\em The Conley-Zehnder indices of the rotating Kepler problem}, arXiv:1201.2032
\bibitem[APS08]{APS08} A. Abbondandolo, A. Portaluri, and M. Schwarz, {\em The homology of path spaces and Floer homology with conormal boundary conditions}, J. Fixed Point Theory Appl. \textbf{4} (2008), no. 2, 263--293.
\bibitem[AS09]{AS09} A. Abbondandolo, M. Schwarz, {\em Estimates and computations in Rabinowitz-Floer homology}, J. Topology Analysis \textbf{1} (2009) no. 4, 307--405.
\bibitem[Bar65]{Bar65} R. Barrar, {\em Existence of periodic orbits of the second kind in the restricted problem of three bodies}, Astronom. J. \textbf{70} (1965), 3--4.
\bibitem[Bir15]{Bir15} G. D. Birkhoff, {\em The restricted problem of three bodies}, Rend. Circ. Matem. Palermo, \textbf{39} (1915), 265--334.
\textbf{239} (2009), no. 2, 251--316.
\bibitem[CFO10]{CFO10} K. Cieliebak, U. Frauenfelder, A. Oancea, {\em  Rabinowitz Floer homology and symplectic homology,} Annales scientifiques de l'ENS \textbf{43}, fasc. 6, 957--1015 (2010).
\bibitem[CFvK11]{CFvK11} K. Cieliebak, U. Frauenfelder, O. van Koert, {\em The Cartan geometry of the rotating Kepler problem}, arXiv:1110.1021.
\bibitem[Flo88a]{Flo88a} A. Floer, {\em Morse theory for Lagrangian intersections}, J. Diff. Geom. \textbf{28} 1988 513--547.
\bibitem[Flo88b]{Flo88b} A. Floer, {\em The unregularizaed gradient flow of the symplectic action}, Comm. Pure Appl. Math, \textbf{41} 1988 775-813
\bibitem[Fra04]{Fra04} U. Frauenfelder, {\em The Arnold-Givental conjecture and moment Floer homology,} Int. Math. Res. Not. \textbf{42} (2004), 2179--2269.
\bibitem[FK14]{FK14} U. Frauenfelder, J. Kang, {\em From gradient flow lines to finite energy planes}, in preparation.


\bibitem[Hof93]{Hof93} H. Hofer. {\em Pseudo-holomorphic curves in symplectisations with application to the Weinstein conjecture in dimension three}, Invent. Math. 114 (1993), 515--563.


\bibitem[HP08]{HP08} A. Harris, G. P. Paternain, {\em Dynamically convex Finsler metrics and $J$-holomorphic embedding of asymptotic cylinders}, Ann. Global Anal. Geom. \textbf{34} (2008) 115--134. 

\bibitem[HWZ98]{HWZ98} H. Hofer, K. Wysocki, E. Zehnder, {\em The dynamics on three-dimensional strictily convex energy surfaces,} Ann. Math. (2)  \textbf{148} (1998) 197--289.

\bibitem[HWZ03]{HWZ03} H. Hofer, K. Wysocki, E. Zehnder, {\em Finite Energy Foliations of tight three-spheres and Hamiltonian dynamics}, Ann. Math. (2)  \textbf{157} (2003) 125--255.


\bibitem[Kli78]{Kli78} W. Klingenberg, ``Lectures on closed geodesics'', Die Grundlehren der Math. Wissenschaften \textbf{230}, Springer-Verlag, 1978
\bibitem[Lon02]{Lon02} Y. Long, ``Index Theory for Symplectic Paths with Applications'', Birkh\"auser, 2002.
\bibitem[LZ00]{LZ00} Y. Long, C. Zhu, {\em Maslov-type index theory for symplectic paths and spectral flow (II)}, Chinese Ann. Math. 21B (1) (2000) 89--108.
\bibitem[LZZ06]{LZZ06} Y. Long, D. Zhang, C. Zhu, {\em Multiple brake orbits in bounded symmetric domains}, Advanced in Math. \textbf{203} (2006) 568--635.
\bibitem[Mer10]{Mer10} W.J. Merry, {\em Lagrangian Rabinowitz Floer homology and twisted cotangent bundle}, arXiv:1010.4190, to appear in Geom. Dedicata.
\bibitem[Mer11]{Mer11} W.J. Merry, Ph.D. thesis, University of Cambridge, 2011.
\bibitem[Mos70]{Mos70} J. Moser, {\em Regularization of Kepler's problem and the averaging method on a manifold}, Comm. Pure Appl. Math.
\textbf{23} (1970) 609--636.
\bibitem[Oh97]{Oh97} Y.-G. Oh, {Symplectic topology as the geometry of of action functional. I - Relative Floer theory on the cotangent bundle}, J. Diff. Geom. \textbf{45} 499--577, 1997.
\bibitem[Pal66]{Pal66} R.S. Palais, {\em Homotopy theory of infinite dimensional manifolds}, Topology \textbf{5} (1966) 1--16.
\bibitem[RS93]{RS93} J. Robbin, D. Salamon, {\em The Maslov index for paths},  Topology \textbf{32} 827--844 1993.
\end{thebibliography}
\end{document}